\newtheorem*{rep@theorem}{\rep@title}
\newcommand{\newreptheorem}[2]{%
\newenvironment{rep#1}[1]{%
 \def\rep@title{#2 \ref{##1}}%
 \begin{rep@theorem}}%
 {\end{rep@theorem}}}
\newtheorem*{rep@proposition}{\rep@title}
\newcommand{\newrepproposition}[2]{%
\newenvironment{rep#1}[1]{%
 \def\rep@title{#2 \ref{##1}}%
 \begin{rep@proposition}}%
 {\end{rep@proposition}}}
\newtheorem{theorem}{Theorem}
\newtheorem{remark}{Remark}[section]
\newtheorem{Def}{Definition}
\newtheorem{lem}{Lemma}[section]
\newtheorem{prop}{Proposition}[section]
\newtheorem{cor}{Corollary}
\newtheorem{conj}{Conjecture}[section]
\newcommand{\N}{\mathbb{N}}
\newcommand{\Z}{\mathbb{Z}}
\newcommand{\Q}{\mathbb{Q}}
\newcommand{\R}{\mathbb{R}}
\newcommand{\C}{\mathbb{C}}
\newcommand{\orbit}[2]{\mathcal{O}_{#1}(#2)}
\newcommand{\A}{\alpha}
\newcommand{\B}{\beta}
\newcommand{\z}{\zeta}
\newcommand{\e}{\equiv}
\newcommand{\OK}{\mathcal{O}_{K}}
\newcommand{\la}{\langle}
\newcommand{\ra}{\rangle}
\DeclareMathOperator{\rad}{rad}
\DeclareMathOperator{\order}{o}
\DeclareMathOperator{\lcm}{lcm}
\DeclareMathOperator{\rank}{rank}
\title{ABC Implies There are Infinitely Many non-Fibonacci-Wieferich Primes - An Application of ABC Conjecture over Number Fields}
\author{Wayne Peng}
\date{\today}
\begin{document}

\maketitle	
\begin{abstract}
In this paper, we define $X$-base Fibonacci-Wieferich prime which is a generalized Wieferich prime where $X$ is a finite set of algebraic numbers. We are going to show that there are infinitely many non-$X$-base Fibonacci-Wieferich primes assuming the $abc$-conjecture of Masser-Oesterl\'{e}-Szpiro for number fields. We also provide a new conjecture concerning the rank of free part of abelian group generated by all elements in $X$, and we will use the arithmetic point of view and geometric point of view to give heuristic. 
\end{abstract}
\section{Introduction}
A degree $m$ linear recurrence sequence is a sequence $X=\{x_n\}_{\geq 0}$ defined by the recurrence relation
\[
x_{n+m}=c_0x_n+c_1x_{n+1}+c_2x_{n+2}+\cdots+c_{m-1}x_{n+m-1}\qquad\forall n\geq 0
\]
with initial values $x_0,x_1,\ldots, x_{m-1}\in \bar{Q}$ where $c_i$s are some given constants in $\bar{\Q}$. 

In this paper, we are aiming to study the periodic properties of a rational recurrence sequence modulo a proper integer. We should use the Fibonacci sequence $\{F_n\}_{n\geq 0}$ as our very first example. The Fibonacci number $F_n$ is defined by the simplest recurrence relation
\[
F_n=F_{n-1}+F_{n-2}\qquad\text{for }n\geq 2
\]
with initial values $F_0=0$ and $F_1=1$. Once we mod out  by $7$, we will get
\[
0,\ 1,\ 1,\ 2,\ 3,\ 5,\ 1,\ 6,\ 0,\ 6,\ 6,\ 5,\ 4,\ 2,\ 6,\ 1,\ 0,\ 1,\ \ldots.
\]
which is a periodic sequence of length $16$. The first research about this topic can be traced back to D.D. Wall\cite{Wall1960}. The following theorem given by Wall allows us to define the period function.
\begin{prop}
For every integer $m$, there exists some integer $n$ such that
\begin{equation}
F_n\e 0\mod m\qquad\mbox{and}\qquad F_{n+1}\e 1\mod m. \label{eq:2}
\end{equation}
Moreover, if we define $\pi(m)$ to be the smallest integer satisfying \eqref{eq:2}, then $n$ is divisible by $\pi(m)$.
\end{prop}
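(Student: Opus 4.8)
The plan is to reformulate the recurrence as the iteration of a single map on a finite set and to exploit that this map is a bijection. Working modulo $m$, set $V=(\Z/m\Z)^2$ and define the \emph{transition map}
\[
T\colon V\to V,\qquad T(a,b)=(b,\,a+b).
\]
A straightforward induction shows that $(F_n \bmod m,\ F_{n+1}\bmod m)=T^n(0,1)$ for all $n\ge 0$, so the congruence \eqref{eq:2} holds for a given $n$ if and only if $T^n(0,1)=(0,1)$. The first thing I would record is that $T$ is invertible: from $(b,a+b)$ one recovers $(a,b)$ via $T^{-1}(c,d)=(d-c,\,c)$, which is again a well-defined self-map of $V$. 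Thus $T$ is a bijection of the finite set $V$, i.e.\ a permutation.

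For existence, I would apply the pigeonhole principle to the sequence of pairs $T^n(0,1)$: since $|V|=m^2$ is finite, there exist $0\le i<j$ with $T^i(0,1)=T^j(0,1)$. Here the invertibility of $T$ does the essential work, because applying $T^{-i}$ gives $T^{\,j-i}(0,1)=(0,1)$; this upgrades mere \emph{eventual} periodicity to \emph{pure} periodicity of the orbit of $(0,1)$. Hence there is a positive integer $n$, namely $n=j-i$, with $F_n\e 0$ and $F_{n+1}\e 1 \pmod m$, which proves the first assertion and legitimizes defining $\pi(m)$ as the least such positive $n$.

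For the ``moreover'' statement I would use minimality together with the division algorithm. Let $\pi=\pi(m)$, so $T^{\pi}(0,1)=(0,1)$ and hence $T^{k\pi}(0,1)=(0,1)$ for every $k\ge 0$. Given any $n$ satisfying \eqref{eq:2}, write $n=q\pi+r$ with $0\le r<\pi$; then
\[
(0,1)=T^n(0,1)=T^{r}\bigl(T^{q\pi}(0,1)\bigr)=T^{r}(0,1),
\]
so $r$ also satisfies \eqref{eq:2}. Minimality of $\pi$ forces $r=0$, giving $\pi\mid n$ as claimed. I do not anticipate a genuine obstacle here, as the argument is elementary; the one point demanding care, and the step I would state most explicitly, is the invertibility of $T$, since without it the orbit would only be eventually periodic and the clean divisibility conclusion could fail.
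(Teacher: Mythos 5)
Your proof is correct and complete. Note, however, that the paper itself offers no proof of this proposition: it is quoted verbatim from D.~D.~Wall's 1960 paper and used as a black box to justify defining the period function, so there is nothing internal to compare against. Your argument is the standard one, and it is organized around exactly the right pivot: encoding the recurrence as iteration of the shift map $T(a,b)=(b,a+b)$ on the finite set $(\Z/m\Z)^2$, observing that $T$ is a bijection (inverse $(c,d)\mapsto(d-c,c)$, i.e.\ running the recurrence backwards), and using invertibility to upgrade the pigeonhole collision $T^i(0,1)=T^j(0,1)$ to pure periodicity $T^{j-i}(0,1)=(0,1)$. The division-algorithm step for the divisibility claim is likewise airtight. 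You correctly identify invertibility of $T$ as the one point where the argument could silently fail for a general degree-$m$ recurrence: the map is invertible here because the trailing coefficient of the characteristic polynomial ($c_0=1$ for Fibonacci) is a unit modulo $m$, which is also why the paper later needs the ``not degenerate'' hypothesis in Proposition~\ref{GNLS} to get periodicity rather than mere eventual periodicity in the general setting.
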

We will call $\pi(m)$ the period function of the Fibonacci sequence modulo $m$. Since $\pi(m)$ is so important in this paper, it deserves a definition.
\begin{Def}
The period function of the Fibonacci sequence modulo $m$, $\pi(m)$,  is the smallest integer satisfying \eqref{eq:2}.
\end{Def}
Wall conjectured that $\pi(p)\neq \pi(p^2)$ for all prime $p$. This is called ``Wall's conjecture", which is the kernel of this paper. Let us state it formally. 
\begin{conj}[Wall's Conjecture]\label{Wall's}
$\pi(p)\neq\pi(p^2)$ for all prime $p$.
\end{conj}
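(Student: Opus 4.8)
\section{A proof proposal for Wall's Conjecture}

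The plan is to recast Wall's Conjecture as a non-vanishing statement for a $p$-adic logarithm and then to isolate that non-vanishing as the genuine difficulty. First I would encode the Fibonacci recurrence by the companion matrix $M=\begin{pmatrix}1&1\\1&0\end{pmatrix}$, so that $M^{n}=\begin{pmatrix}F_{n+1}&F_{n}\\F_{n}&F_{n-1}\end{pmatrix}$ and $\pi(m)$ is exactly the order of $M$ in $GL_2(\Z/m)$. A short computation with the binomial theorem shows that for odd $p$ one has $M^{p\,\pi(p)}\e I\pmod{p^2}$, while trivially $\pi(p)\mid\pi(p^2)$; hence $\pi(p^2)\in\{\pi(p),\,p\,\pi(p)\}$, and Wall's Conjecture is equivalent to the single congruence $M^{\pi(p)}\not\e I\pmod{p^2}$ for every prime $p$. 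Writing $M^{\pi(p)}=I+pA$ with $A\in M_2(\Z)$, the assertion becomes $A\not\e 0\pmod p$, i.e.\ (combining the recurrence with the identity $F_{n+1}F_{n-1}-F_n^2=(-1)^n$) the statement $p^2\nmid F_{\pi(p)}$.

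Next I would pass to $GL_2(\Z_p)$ and, for $p\notin\{2,5\}$, diagonalise $M$ over the unramified quadratic extension, with eigenvalues $\A=\tfrac{1+\sqrt5}{2}$ and $\B=\tfrac{1-\sqrt5}{2}$ (the small primes $2,5$ are verified directly to satisfy the conjecture). For odd $p$ the $p$-adic exponential and Iwasawa logarithm identify the principal units with $pO$, so $M^{\pi(p)}\e I\pmod{p^2}$ is equivalent to $\log_p\!\bigl(\A^{\pi(p)}\bigr)\e 0\pmod{p^2}$, that is $\pi(p)\log_p\A\e 0\pmod{p^2}$. Since the order of $M$ modulo $p$ divides $p^2-1$, the period $\pi(p)$ is prime to $p$; therefore the exceptional condition $\pi(p^2)=\pi(p)$ is exactly $v_p(\log_p\A)\ge 2$. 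As $\A^{\pi(p)}\e 1\pmod p$ already forces $v_p(\log_p\A)\ge 1$, Wall's Conjecture reduces to the clean uniform statement
\[
v_p(\log_p\A)=1\qquad\text{for every prime }p,
\]
equivalently that the normalised Fibonacci–Wieferich quotient $\tfrac1p F_{\pi(p)}\bmod p$ never vanishes.

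The hard part---indeed the whole of the problem---is this final non-vanishing, which asserts that the $p$-adic logarithm of the fundamental unit $\A$ is a $p$-adic unit after a single division by $p$, simultaneously for all $p$. I would look for leverage either from $p$-adic transcendence theory (the $p$-adic Baker–Wüstholz estimates of K.~Yu) or from regulator non-vanishing in the style of Brumer–Leopoldt; but the former controls $v_p(\log_p\A)$ only for a fixed prime and cannot pin the exact value $1$ uniformly, while the latter yields non-vanishing of $\log_p\A$ in $\Q_p$ rather than the sharper mod-$p$ condition $\log_p\A\not\e 0\pmod{p^2}$. In fact the standard probabilistic model---treating $\tfrac1p F_{\pi(p)}\bmod p$ as uniform in $\Z/p$, so that $\sum_p 1/p$ diverges---predicts infinitely many exceptional (Wall–Sun–Sun) primes, so one expects the conjecture to be \emph{false}; any proof would therefore have to exhibit an arithmetic rigidity of $\A$ that overrides this heuristic, and no such mechanism is presently available. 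This is precisely why the body of the paper does not attempt the conjecture itself but, via the $abc$-conjecture over number fields, secures the strictly weaker conclusion that infinitely many primes are \emph{non}-Fibonacci–Wieferich.
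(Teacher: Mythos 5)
You have not proved the statement, and nothing in the paper does either: Conjecture~\ref{Wall's} is stated as an open conjecture, and the paper's actual results (Theorems~\ref{mainresult:1} and~\ref{mainresult:2}) establish only the far weaker conditional claim that infinitely many primes satisfy $\pi(p)\neq\pi(p^2)$, assuming the $abc$-conjecture over number fields. So there is no proof in the paper to compare against, and your proposal, to its credit, does not pretend to close the question. Your reduction itself is sound: the dichotomy $\pi(p^2)\in\{\pi(p),\,p\,\pi(p)\}$, the equivalence of $\pi(p)=\pi(p^2)$ with $p^2\mid F_{\pi(p)}$ (using Cassini's identity together with the evenness of $\pi(p)$ for $p>2$, which you should state explicitly, since otherwise $F_{\pi+1}^2\e-1\mod p^2$ is not excluded), the coprimality $p\nmid\pi(p)$ for $p\neq 2,5$ from diagonalizability over the unramified quadratic extension, and the translation into $v_p(\log_p\A)\geq 2$ via the exponential/logarithm isomorphism on principal units are all correct and standard.

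The genuine gap is the one you name yourself: the ``clean uniform statement'' $v_p(\log_p\A)=1$ for every prime $p$ is not a lemma awaiting a technique --- it is Wall's conjecture verbatim in $p$-adic clothing, so the reduction has zero net content as a proof strategy. Your own assessment of the available tools is accurate: Yu's $p$-adic Baker-type estimates bound $v_p(\log_p\A)$ from above only for a fixed $p$, with constants depending on $p$, and cannot give the exact value $1$ uniformly; Leopoldt/Brumer-type nonvanishing gives $\log_p\A\neq 0$, i.e.\ $v_p(\log_p\A)<\infty$, which is vacuous here. Worse for any would-be proof, the paper's own heuristic cuts the other way: since $\rank\la(1+\sqrt{5})/2,(1-\sqrt{5})/2\ra=1$, Conjecture~\ref{myconj} predicts \emph{infinitely many} Fibonacci-Wieferich primes, i.e.\ infinitely many $p$ with $v_p(\log_p\A)\geq 2$, so Wall's conjecture is expected to be false and no correct argument can exist for it if that heuristic is right. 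The honest verdict: your write-up is a correct equivalence plus an accurate survey of why the equivalent statement is out of reach, but it is not a proof, and the only theorem salvageable from this circle of ideas is exactly the one the paper proves conditionally --- that the exceptional set misses infinitely many primes.
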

Our main purpose for this paper is to show that there are infinity many primes with $\pi(p)\neq \pi(p)^2$, and we can actually do more general.

We require to estate some notation for stating the first main result. I would like to call a given $2m$-tuple $(a_1,\ldots,a_m,b_1,\ldots, b_m)$ a \textbf{recurrence tuple} if $a_1,\ldots,a_m,b_1,\ldots,b_m\in\bar{\Q}\setminus\{0\}$ with $a_i\neq a_j$ for all $i\neq j$. I will also say that a sequence $X=\{x_n\}$ is a recurrence sequence generalized by the recurrence tuple if the closed form of the terms of the sequence is
\[
x_n=b_1a_1^n+b_2a_2^n+\cdots+b_ma_m^n\qquad\forall n\geq 0
\]
a number field $K$ is a splitting field of the tuple $(a_1,\ldots,a_m,b_1,\ldots,b_m)$ if $K$ is the splitting field of $a_1,\ldots,a_m,b_1,\ldots,b_m$ over $\Q$. I would also like to call a prime ideal $\mathfrak{p}$ $X$-\textbf{base Fibonacci-Wieferich prime} if the period of the sequence $X$ modulo $\mathfrak{p}$ is equal to one  modulo $\mathfrak{p}^2$, i.e. $\mathfrak{p}$ is a prime ideal such that Wall's conjecture fails. I will call an ideal $I$ of $K$ is \textbf{not degenerate relative to} the recurrence tuple $(a_1,\ldots,a_m,b_1,\ldots,b_m)$ if $v_{\mathfrak{p}_j}(a_i)=0$ and $v_{\mathfrak{p_j}}(b_i)=0$ for all $j$ and $i$ where $\mathfrak{p}_j$ is a prime ideal factor of $I$. If $I$ is not degenerate relative to the recurrence tuple, the order of $a_i\mod I^e$ for any integer $e$ is well-defined for all $i$. We denote the order by $\order_{I^e}(a_i)$. We also define
\[
\pi_X(I):=\text{the period of the sequence }\{x_n\mod I\}.
\]
With these, we are able to say the following proposition.
\begin{prop}\label{GNLS}
For any ideal $I=\mathfrak{p}_1^{e_1}\cdots\mathfrak{p}_r^{e_r}$ of $K$ not degenerate relative to the recurrence tuple $ (a_1,\ldots,a_m,b_1,\ldots,b_m)$, $\{x_n\mod I^e\}$ is periodic. Moreover, we have
\[
\pi_X(I^e)=\lcm\{\order_{\mathfrak{p}_i^{e_i}}(a_j)\mid \text{for all }i, j\}=\pi_X(\mathfrak{p}_1^{e_1})\pi_X(\mathfrak{p_2}^{e_2})\cdots \pi_X(\mathfrak{p}_r^{e_r}).
\]
\end{prop}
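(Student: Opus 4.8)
The plan is to strip off the periodicity and the product/$\lcm$ structure using the Chinese Remainder Theorem, and then to reduce the whole assertion to a single identity at a prime power, namely $\pi_X(\mf{p}^{f})=\lcm_{j}\order_{\mf{p}^{f}}(a_j)$. Write the modulus in the statement as $\prod_i\mf{p}_i^{f_i}$ (so the exponents $f_i$ play the role of the $e_i$, possibly scaled by $e$). First I would record that the orders are genuinely defined: by non-degeneracy $v_{\mf{p}_i}(a_j)=0$ for all $i,j$, so each $a_j$ is a unit in the finite ring $\OK/\mf{p}_i^{f_i}$ and hence has finite multiplicative order. The ideals $\mf{p}_i^{f_i}$ being pairwise coprime, CRT gives $\OK/\bigl(\prod_i\mf{p}_i^{f_i}\bigr)\cong\prod_i\OK/\mf{p}_i^{f_i}$, and a sequence valued in a finite product ring is (purely) periodic exactly when each coordinate is, with period the $\lcm$ of the coordinate periods. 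This yields $\pi_X(\prod_i\mf{p}_i^{f_i})=\lcm_i\pi_X(\mf{p}_i^{f_i})$, which is the last equality of the statement, and reduces everything to one prime power.

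For the prime-power identity I would argue by two divisibilities. Put $N=\lcm_j\order_{\mf{p}^{f}}(a_j)$. Then $a_j^{N}\e 1\bmod\mf{p}^{f}$ for every $j$, so $x_{n+N}=\sum_j b_j a_j^{n}a_j^{N}\e\sum_j b_j a_j^{n}=x_n\bmod\mf{p}^{f}$ for all $n\ge 0$; this both establishes pure periodicity and gives $\pi_X(\mf{p}^{f})\mid N$. For the reverse inclusion set $P=\pi_X(\mf{p}^{f})$. From $x_{n+P}\e x_n$ we obtain $\sum_j b_j(a_j^{P}-1)a_j^{n}\e 0\bmod\mf{p}^{f}$ for all $n\ge 0$. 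Specializing to $n=0,1,\dots,m-1$ says that the vector $\bigl(b_j(a_j^{P}-1)\bigr)_j$ is annihilated by the Vandermonde matrix $(a_j^{n})_{0\le n\le m-1,\,1\le j\le m}$, whose determinant is $\prod_{i<j}(a_j-a_i)$. As soon as this determinant is a unit modulo $\mf{p}$, the matrix is invertible over the local ring $\OK/\mf{p}^{f}$, forcing $b_j(a_j^{P}-1)\e 0$; since each $b_j$ is a unit we conclude $a_j^{P}\e 1\bmod\mf{p}^{f}$, i.e. $\order_{\mf{p}^{f}}(a_j)\mid P$ for every $j$, hence $N\mid P$.

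Combining the two divisibilities gives $\pi_X(\mf{p}^{f})=N$, and regrouping the global least common multiple over all pairs as $\lcm_i\lcm_j\order_{\mf{p}_i^{f_i}}(a_j)$ recovers the first equality. The main obstacle is precisely the unit hypothesis on the Vandermonde determinant: the stated non-degeneracy controls $v_{\mf{p}}(a_j)$ and $v_{\mf{p}}(b_j)$ but says nothing about $v_{\mf{p}}(a_i-a_j)$, so two of the roots with $a_i\ne a_j$ in $\bar{\Q}$ may still collide modulo $\mf{p}$ and render $\prod_{i<j}(a_j-a_i)$ non-invertible. I would resolve this by adding $v_{\mf{p}}(a_i-a_j)=0$ for $i\ne j$ to the non-degeneracy condition, or equivalently by discarding the finitely many primes dividing the ``discriminant'' $\prod_{i<j}(a_i-a_j)$ of the tuple---harmless for the eventual application, which only needs infinitely many admissible primes. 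Without such a restriction one must first coalesce the roots that agree modulo $\mf{p}^{f}$, replacing their coefficients by partial sums; the genuinely delicate situation, where $a_i\e a_j\bmod\mf{p}$ but $a_i\not\equiv a_j\bmod\mf{p}^{f}$, is where the clean equality can fail and is the crux to watch.
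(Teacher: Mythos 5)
Your argument is essentially the paper's: reduce to a prime power via CRT, get $\pi_X(\mathfrak{p}^f)\mid\lcm_j\order_{\mathfrak{p}^f}(a_j)$ directly from the closed form, and obtain the reverse divisibility by feeding $x_{n+P}\equiv x_n$ for $n=0,\dots,m-1$ into a Vandermonde-type matrix and inverting it modulo $\mathfrak{p}^f$. The one substantive difference is that you correctly identify the invertibility of that matrix as the crux rather than taking it for granted: the paper asserts invertibility from ``$I$ not degenerate and the $a_i$ distinct,'' but its non-degeneracy condition only controls $v_{\mathfrak{p}}(a_i)$ and $v_{\mathfrak{p}}(b_i)$, not $v_{\mathfrak{p}}(a_i-a_j)$, and distinctness in $\bar{\Q}$ does not prevent a collision modulo $\mathfrak{p}$. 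This is a genuine gap in the proposition as stated, not a cosmetic one: over $\Q$ with $\mathfrak{p}=(5)$, $a_1=6$, $a_2=21$, $b_1=b_2=1$, the tuple is non-degenerate in the paper's sense, yet $x_n=6^n+21^n\equiv 2\pmod{25}$ is constant, so $\pi_X(25)=1$ while $\lcm(\order_{25}(6),\order_{25}(21))=5$. Your proposed repair --- requiring $v_{\mathfrak{p}}(a_i-a_j)=0$ for $i\neq j$, i.e.\ discarding the finitely many primes dividing $\prod_{i<j}(a_i-a_j)$ --- is the right fix and is harmless for the rest of the paper, which only needs the statement for all but finitely many primes.
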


An \textbf{$a$-base Wieferich prime} by definition is a prime integer $p$ satisfying
\[
a^{p-1}\e1\mod p^2
\]
for a fixed integer $a$, or we can define more generally.

\begin{Def}
Let $K$ be an number field, and let $\A\in K$. An prime ideal $\mathfrak{p}$ of $K$ is called an $\A$-base Wieferich prime if 
\[
\A^{N_{K/\Q}(\mathfrak{p})-1}\e 1\mod\mathfrak{p}^2.
\]
\end{Def}
Thus, instead of considering $X$ as a sequence, Proposition~\ref{GNLS} suggests a generalized definition of Wieferich prime.
\begin{Def}
Let $X=\{a_1,a_2,\ldots,a_m\}$, and let $K$ be a number field containing all $a_i$ . An $X$-base Fibonacci-Wieferich prime $\mathfrak{p}$ is a prime ideal of $K$ with
\[
a_i^{N_{K/\Q}(\mathfrak{p})-1}\e 1\mod\mathfrak{p}^2\qquad\forall i
\]
where $N_{K/\Q}$ is the usual norm.
\end{Def}
As our first main theorem, we are going to show that there are infinitely many primes which are not $X$-base Fibonacci-Wieferich primes, assuming the $abc$-conjecture of Masser-Oesterl\'{e}-Szpiro for number fields. We shall call a number field $K$ $abc$-field if the $abc$-conjecture is true on $K$. 
\begin{theorem}\label{mainresult:1}
Let $K$ be the splitting field of $X=\{a_1,\ldots,a_m\}$. Then, there are infinitely many non-$X$-base Fibonacci-Wieferich primes, assuming $K$ is an $abc$-field.
\end{theorem}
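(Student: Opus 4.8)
The plan is to reduce the theorem to the single-base Wieferich problem and then run a number-field version of Silverman's deduction of infinitely many non-Wieferich primes from $abc$. Observe first that a prime $\mf{p}$ fails to be $X$-base Fibonacci-Wieferich as soon as $a_i^{N_{K/\Q}(\mf{p})-1}\not\e 1\bmod\mf{p}^2$ for a \emph{single} index $i$; hence it suffices to exhibit infinitely many prime ideals that are not $\A$-base Wieferich for one fixed $\A=a_i$. If every $a_i$ is a root of unity the statement is in fact unconditional: for $\A$ of finite order $w$ and $\mf{p}$ of residue characteristic prime to $w$, the prime-to-$p$ torsion injects into $(\OK/\mf{p}^2)^\times$, so $\order_{\mf{p}^2}(\A)=w$ and $\mf{p}$ is non-Wieferich exactly when $w\nmid N_{K/\Q}(\mf{p})-1$, a Chebotarev condition met by a positive density of primes. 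I may therefore assume some $\A:=a_1$ is not a root of unity, so that its height satisfies $H_K(\A)>1$.

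Next I would record the local criterion. Let $S$ be the finite set of places consisting of the archimedean ones together with the primes at which some $a_i$ fails to be a unit; for $\mf{p}\notin S$ put $d=\order_\mf{p}(\A)$. Because $(\OK/\mf{p}^2)^\times\cong(\OK/\mf{p})^\times\times(1+\mf{p})/(1+\mf{p}^2)$ and the second factor is the $p$-part, $\order_{\mf{p}^2}(\A)$ is either $d$ or $p\,d$; since $p\nmid N_{K/\Q}(\mf{p})-1$, the prime $\mf{p}$ is $\A$-base Wieferich precisely when $\order_{\mf{p}^2}(\A)=d$, equivalently $v_\mf{p}(\A^{d}-1)\geq 2$. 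A lifting-the-exponent computation, valid for all but finitely many $\mf{p}$, then gives $v_\mf{p}(\A^{n}-1)=v_\mf{p}(\A^{d}-1)+v_\mf{p}(n/d)$ whenever $d\mid n$, so that $\mf{p}$ is non-Wieferich exactly when $v_\mf{p}(\A^{d}-1)=1$.

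The arithmetic input is the $abc$-conjecture on $K$ applied, for each rational prime $\ell$, to the relation $(\A^{\ell}-1)+1=\A^{\ell}$. This yields $N_{K/\Q}(\rad(\A^{\ell}-1))\gg_\epsilon H_K(\A)^{\ell(1-\epsilon)}$, while trivially $N_{K/\Q}\big((\A^{\ell}-1)\big)\ll H_K(\A)^{\ell}$, so the non-radical ideal $D_\ell=(\A^\ell-1)/\rad(\A^\ell-1)$ satisfies $N_{K/\Q}(D_\ell)\ll_\epsilon H_K(\A)^{\epsilon\ell}$. Because $\ell$ is prime, every $\mf{p}\mid\A^\ell-1$ has $d\in\{1,\ell\}$; the primes with $d=1$ divide the fixed element $\A-1$ and contribute at most $\ell^{[K:\Q]}$ to $N_{K/\Q}(D_\ell)$ through the $v_\mf{p}(\ell)$ term, while by the exponent formula each Wieferich prime with $d=\ell$ contributes a factor $\geq N_{K/\Q}(\mf{p})$. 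Hence the product of norms of the \emph{Wieferich} primitive primes (those with $d=\ell$) is $\ll_\epsilon H_K(\A)^{\epsilon\ell}$, whereas the product over \emph{all} primitive primes is $\gg_\epsilon H_K(\A)^{\ell(1-\epsilon)}$. Choosing $\epsilon<\tfrac12$ forces, for every large $\ell$, a non-Wieferich prime of order exactly $\ell$ dividing $\A^\ell-1$; since the order determines $\ell$, these primes are pairwise distinct, producing infinitely many non-$X$-base Fibonacci-Wieferich primes.

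The main obstacle I anticipate is technical rather than conceptual: pinning down the correct normalization of the Masser--Oesterl\'{e}--Szpiro inequality over $K$ so that the archimedean and ramification factors are absorbed into the implied constant and the clean bound $N_{K/\Q}(\rad(\A^\ell-1))\gg H_K(\A)^{\ell(1-\epsilon)}$ genuinely follows, together with verifying the lifting-the-exponent identity at the finitely many ramified primes and primes above $2$ that must be discarded. Controlling these bad places, and ensuring the separation of the Wieferich contribution from the primitive part survives them, is where the care is needed; the remaining counting is then immediate.
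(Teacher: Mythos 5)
Your proposal is sound in the main case and runs on the same engine as the paper: reduce to a single base $\A=a_i$ (this is the paper's Lemma~\ref{fxxk}), then apply the number-field $abc$-inequality to the identity $\A^n=(\A^n-1)+1$ to force the non-squarefree part of $(\A^n-1)$ to be small. Where you genuinely diverge is in how infinitude is extracted. The paper's Section~3 proof is cruder: it sets $U_n$ equal to the product of the primes dividing $\A^n-1$ exactly once, supposes $N(U_n)$ bounded, and derives a contradiction from $\tfrac{(1-\varepsilon)}{2}N(V_n)\leq\varepsilon N(U_n)+C_{\varepsilon,\A}$; unboundedness of $N(U_n)$ then gives infinitely many primes with no control over which $n$ produces which prime. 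You instead work with prime exponents $\ell$, split off the primitive part ($\order_{\mathfrak{p}}(\A)=\ell$) from the $d=1$ part via lifting-the-exponent, and use the fact that the order of $\A$ modulo $\mathfrak{p}$ determines $\ell$ to obtain pairwise distinct non-Wieferich primes, one per large $\ell$. That is precisely the refinement the paper defers to Section~4 (Lemmas~\ref{lm:4.1}, \ref{lm:4.2} and \ref{lm:4.5}) in order to prove the quantitative Theorem~\ref{mainresult:2}; your version therefore proves somewhat more than Theorem~\ref{mainresult:1} asks, at the cost of the extra bookkeeping at ramified places and places above $2$ that you correctly flag.

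One correction is needed in your opening reduction: the all-roots-of-unity case is not ``unconditional'' as you claim. If $\A$ has order $w$ and $K$ is the splitting field of $X$, then $\mu_w\subset K$, so $w$ divides $N_{K/\Q}(\mathfrak{p})-1$ for every unramified $\mathfrak{p}$ prime to $w$; your Chebotarev condition $w\nmid N_{K/\Q}(\mathfrak{p})-1$ is met by \emph{no} primes, $\A^{N_{K/\Q}(\mathfrak{p})-1}=1$ identically, and all but finitely many primes are $X$-base Fibonacci-Wieferich — the theorem is simply false in that degenerate case. The paper makes the same silent omission (its proof needs $h(\A^n-1)\to\infty$, hence some $a_i$ of infinite order), so this does not put you behind the paper, but the correct move is to add the hypothesis that some $a_i$ is not a root of unity rather than to claim the complementary case unconditionally.
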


We choose the recurrence tuple to be $((1+\sqrt{5})/2,(1-\sqrt{5})/2,1/\sqrt{5},-1/\sqrt{5})$, and Theorem~\ref{mainresult:1} deduces that there are infinitely many non-$F$-base Fibonacci-Wieferich primes (or non-Fibonacci-Wieferich primes for short), which are primes such that Wall's conjecture fails.

For obtaining a logarithm lower bound, we need some other notation. Let $K$ be any number field. Given an element $\gamma\in K$, We define 
\[
W_\gamma(B):=\{\text{prime ideal }\mathfrak{p}\mid N(\mathfrak{P})\leq \log B,\text{ and }\mathfrak{p}\text{ is an }X\text{-base Fibonacci-Wieferich prime}\}.
\]
Then, we follows Silverman's \cite{Silverman1988} closely to give a logarithm lower bound.
\begin{theorem}\label{mainresult:2}
For any non zero algebraic number $\gamma\in K\setminus\partial D_1(0)$ where $\partial D_1(0)$ is the complex unit circle with center at origin, there is a constant $C_{\gamma}$ such that we have
\[
|W_\gamma(B)|\geq C_{\gamma}\log B\qquad\forall B\gg 0
\]
assuming $K$ is an $abc$-field.
\end{theorem}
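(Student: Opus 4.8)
The plan is to transcribe Silverman's argument for the rational case \cite{Silverman1988} into the number field $K$, replacing rational primes by prime ideals, the absolute value by the norm $N=N_{K/\Q}$, and the classical $abc$-inequality by the $abc$-conjecture on the $abc$-field $K$. Since the Wieferich congruence $\g^{N(\mathfrak{p})-1}\e1\bmod\mathfrak{p}^2$ is imposed one base element at a time, it suffices to treat the single base $\g$, and throughout I would discard the finitely many prime ideals that are degenerate relative to $\g$ (those dividing the numerator or denominator of $\g$, the ramified residue characteristics, and the primes at which the lifting-the-exponent identity below fails); being finite in number these do not affect an asymptotic count. For a non-degenerate $\mathfrak{p}$ set $d_{\mathfrak{p}}:=\order_{\mathfrak{p}}(\g)$, so that $\mathfrak{p}\mid\g^n-1$ exactly when $d_{\mathfrak{p}}\mid n$. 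Because $d_{\mathfrak{p}}\mid N(\mathfrak{p})-1$ while $v_{\mathfrak{p}}(N(\mathfrak{p})-1)=0$, the number field lifting-the-exponent identity $v_{\mathfrak{p}}(\g^n-1)=v_{\mathfrak{p}}(\g^{d_{\mathfrak{p}}}-1)+v_{\mathfrak{p}}(n)$ shows that $\mathfrak{p}$ fails to be a $\g$-base Wieferich prime precisely when $v_{\mathfrak{p}}(\g^{d_{\mathfrak{p}}}-1)=1$; write $\mathcal{W}$ for the set of $\g$-base Wieferich primes.

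First I would extract two estimates from the $abc$-conjecture applied to the identity $(\g^n-1)+1=\g^n$ in $K$. The hypothesis $\g\notin\partial D_1(0)$ forces $|\g|\neq 1$, hence $\g$ is not a root of unity, $h(\g)>0$, and $h(\g^n-1)=n\,h(\g)+O(1)$; the conductor of the triple then admits the lower bound
\[
\log N\bigl(\rad(\g^n-1)\bigr)\ \geq\ (1-\epsilon)\,[K:\Q]\,h(\g)\,n-O_{\epsilon}(1).
\]
On the other hand the non-squarefree excess $\prod_{\mathfrak{p}}N(\mathfrak{p})^{v_{\mathfrak{p}}(\g^n-1)-1}$ is bounded above, by the same inequality, by $O_{\epsilon}\!\bigl(e^{\epsilon[K:\Q]h(\g)n}\bigr)$. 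Every $\mathfrak{p}\in\mathcal{W}$ with $d_{\mathfrak{p}}\mid n$ has $v_{\mathfrak{p}}(\g^n-1)\geq v_{\mathfrak{p}}(\g^{d_{\mathfrak{p}}}-1)\geq2$ and so contributes a factor at least $N(\mathfrak{p})$ to this excess; comparing the two yields
\[
\sum_{\substack{\mathfrak{p}\in\mathcal{W}\\ d_{\mathfrak{p}}\mid n}}\log N(\mathfrak{p})\ \leq\ \epsilon\,[K:\Q]\,h(\g)\,n+O_{\epsilon}(1).
\]

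Subtracting the second estimate from the first after summing over $1\leq n\leq N$ isolates the non-Wieferich primes:
\[
\sum_{n\leq N}\ \sum_{\substack{\mathfrak{p}\notin\mathcal{W}\\ d_{\mathfrak{p}}\mid n}}\log N(\mathfrak{p})\ \geq\ (1-2\epsilon)\,[K:\Q]\,h(\g)\,\frac{N^2}{2}-O_{\epsilon}(N).
\]
Interchanging the order of summation and using $\#\{n\leq N: d_{\mathfrak{p}}\mid n\}=\lfloor N/d_{\mathfrak{p}}\rfloor$ rewrites the left side as $\sum_{d_{\mathfrak{p}}\leq N,\ \mathfrak{p}\notin\mathcal{W}}\lfloor N/d_{\mathfrak{p}}\rfloor\log N(\mathfrak{p})$, whence
\[
\sum_{\substack{\mathfrak{p}\notin\mathcal{W}\\ d_{\mathfrak{p}}\leq N}}\frac{\log N(\mathfrak{p})}{d_{\mathfrak{p}}}\ \geq\ (1-2\epsilon)\,[K:\Q]\,h(\g)\,\frac{N}{2}-O_{\epsilon}(1).
\]
Since $\mathfrak{p}\mid\g^{d_{\mathfrak{p}}}-1$ forces $\log N(\mathfrak{p})\leq[K:\Q]\,h(\g)\,d_{\mathfrak{p}}+O(1)$, each summand is at most a constant $C_1$, so the number of non-Wieferich primes with $d_{\mathfrak{p}}\leq N$ is at least $c\,N$ for an explicit $c=c(\g)>0$. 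Each such prime satisfies $N(\mathfrak{p})\leq N(\g^{d_{\mathfrak{p}}}-1)\leq e^{C_1 N}=:B$, that is $N\asymp\log B$, and therefore the number of primes of norm at most $B$ at which the Wieferich congruence fails is at least $C_{\g}\log B$, which is the asserted bound.

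The main obstacle is the honest invocation of the $abc$-conjecture over $K$: one must phrase both the radical lower bound and the excess upper bound in terms of the $K$-height of the projective point $(\g^n-1:1:\g^n)$ and the conductor $\prod_{\mathfrak{p}\mid\g^n-1}\mathfrak{p}$, carry along the discriminant factor $|\dkq|$ and the archimedean contributions, and verify that the comparison between the non-squarefree excess and the conductor is not spoiled by ramification or by the non-integrality of $\g$. Controlling the archimedean places is exactly where $\g\notin\partial D_1(0)$ is used, since it guarantees $h(\g)>0$ and the linear growth of $h(\g^n-1)$; a secondary technical point is establishing the number field lifting-the-exponent identity for all but finitely many $\mathfrak{p}$, which is what legitimizes the dichotomy $v_{\mathfrak{p}}(\g^{d_{\mathfrak{p}}}-1)\in\{1,\ \geq2\}$ used throughout.
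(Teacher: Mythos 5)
Your argument is correct in outline and reaches the stated bound, but it takes a genuinely different combinatorial route from the paper. The shared core is the application of the $abc$-conjecture to the triple $(\gamma^n,1,\gamma^n-1)$ to show that the powerful (non-squarefree) part of $\gamma^n-1$ has logarithmic norm at most $\varepsilon n h(\gamma)+O_\varepsilon(1)$, together with the reduction of the Wieferich condition at $\mathfrak{p}$ to $v_{\mathfrak{p}}(\gamma^{\order_{\mathfrak{p}}(\gamma)}-1)\geq 2$. After that the two proofs diverge. The paper (Lemmas~\ref{lm:4.1}--\ref{lm:4.5}) works with the cyclotomic value $\Phi_n(\gamma)$: a first-power primitive prime divisor of $\Phi_n(\gamma)$ coprime to $nIJ$ is a non-Wieferich prime with $m_{\mathfrak{p}}=n$ exactly, so distinct $n$ give distinct primes; the lower bound $N(\Phi_n(\gamma))\geq c\varphi(n)$ is obtained from equidistribution of primitive roots of unity and Jensen's formula, and the count is completed with the density of $n$ satisfying $\varphi(n)\geq\delta n$. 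You instead run a double count directly on $\gamma^n-1$: sum the radical lower bound over $n\leq N$, subtract the Wieferich contribution (which lives inside the powerful part), interchange the order of summation, and use the elementary bound $\log N_{K/\Q}(\mathfrak{p})\leq C_1 d_{\mathfrak{p}}$ to turn the weighted sum $\sum \log N_{K/\Q}(\mathfrak{p})/d_{\mathfrak{p}}\gg N$ into $\gg N$ non-Wieferich primes of order at most $N$, hence of norm controlled by $B$ with $N\asymp\log B$. Your route dispenses with cyclotomic polynomials, the totient density lemma, and the equidistribution/Jensen step --- the last being the most delicate point of the paper's argument, since $\log|z-\sigma(\gamma)|$ is singular on the unit circle when a conjugate of $\gamma$ lies there --- at the cost of the explicit $6/\pi^2$-type constant and of exhibiting, for each admissible $n$, a prime of multiplicative order exactly $n$. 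The points you flag as needing care (discarding ramified primes, residue characteristic $2$, and divisors of $IJ$ for the lifting-the-exponent dichotomy, and the archimedean bookkeeping in the height--radical comparison) are real but only affect finitely many primes and the implied constants, so they do not threaten the asymptotic.
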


Heuristic result(\cite{GP2015}, \cite{CDP1997}, \cite{RE2007} or \cite{AJ2010}) indicates that we have the following conjecture, which contradicts to Wall's conjecture.
\begin{conj}\label{myconj}
Let $X=\{a_1,\ldots,a_m\}$. Then, we have
\begin{enumerate}
\item infinitely many $X$-base Fibonacci-Wieferich primes if the free rank of the multiplicative group $\la a_1,\ldots,a_m\ra$ is $1$.
\item finitely many $X$-base Fibonacci-Wieferich primes if the free rank of the multiplicative group $\la a_1,\ldots,a_m\ra$ is greater than $1$.
\end{enumerate}
\end{conj}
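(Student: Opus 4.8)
Since the statement is a conjecture rather than a theorem, the plan is not to prove it but to assemble the heuristic promised in the abstract, organised around the arithmetic and the geometric pictures. I would begin with the probabilistic model on the arithmetic side. For a single base $\A$, the congruence $\A^{N(\mf{p})-1}\e 1 \pmod{\mf{p}}$ holds automatically in the residue field, so being $\A$-base Wieferich at $\mf{p}$ is exactly the event that the residue $\A^{N(\mf{p})-1}$ lands in $1+\mf{p}^2$ inside the cyclic group $(1+\mf{p})/(1+\mf{p}^2)\cong\OK/\mf{p}$ of order $N(\mf{p})$. Treating this residue as equidistributed, the probability that $\mf{p}$ is Wieferich for $\A$ is $1/N(\mf{p})$; this is the standard model underlying the cited heuristics.

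Next I would reduce the simultaneous condition for $X$ to a condition on a fixed set of multiplicatively independent generators. Roots of unity and multiplicative relations among the $a_i$ produce congruences that either hold for all but finitely many $\mf{p}$ or are implied by the others, so up to a finite set the $X$-base Fibonacci-Wieferich condition is equivalent to $g_k^{N(\mf{p})-1}\e 1 \pmod{\mf{p}^2}$ for $k=1,\dots,r$, where $r=\rank\la a_1,\dots,a_m\ra$. The crucial modelling hypothesis is that these $r$ events are independent, giving probability $N(\mf{p})^{-r}$ that $\mf{p}$ is $X$-base Fibonacci-Wieferich. A Borel--Cantelli count of the expected number of such primes then reduces to the convergence of
\[
\sum_{\mf{p}} \frac{1}{N(\mf{p})^{r}}.
\]
By the prime ideal theorem (equivalently, the behaviour of the Dedekind zeta function of $K$ near $s=r$), this sum diverges like $\log\log B$ when $r=1$ and converges when $r\ge 2$. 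The first case predicts infinitely many, but very sparse, $X$-base Fibonacci-Wieferich primes, which is part~(1); the second predicts only finitely many, which is part~(2).

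For the geometric viewpoint I would recast the same dichotomy through reduction on $\mathbb{G}_m^{r}$: the Wieferich locus is the preimage of the identity under the map sending $\mf{p}$ to $(g_1,\dots,g_r)$ read in the kernel-of-reduction subgroup $(1+\mf{p})/(1+\mf{p}^2)$, a condition of codimension $r$. Counting the contributions with the height estimates that drive the proof of Theorem~\ref{mainresult:2} recovers the same threshold: one independent coordinate gives a logarithmically growing count, whereas $r\ge 2$ independent coordinates cut the expected count down to a convergent total. The main obstacle — and the reason this remains a conjecture rather than a theorem — is precisely the independence-and-equidistribution hypothesis: there is no known mechanism forcing the Fermat quotients of multiplicatively independent generators to behave like independent uniform variables, and even the $r=1$ case (infinitude of classical Wieferich primes) is open. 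Thus the honest deliverable is the heuristic expectation together with its consistency against the divergence or convergence of $\sum N(\mf{p})^{-r}$, not a proof.
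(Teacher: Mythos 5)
Your heuristic matches the paper's own treatment of this conjecture almost exactly: the same probabilistic model ($\A^{N_{K/\Q}(\mathfrak{p})-1}\e 1+k_\mathfrak{p}\mathfrak{p}\bmod\mathfrak{p}^2$ with $k_\mathfrak{p}$ treated as uniform in $\OK/\mathfrak{p}$), the same reduction to $r=\rank\la a_1,\ldots,a_m\ra$ independent events, and the same dichotomy governed by the divergence or convergence of $\sum_{\mathfrak{p}}N_{K/\Q}(\mathfrak{p})^{-r}$. The only substantive ingredient of the paper you do not reproduce is the proposition, proved via Laurent's theorem on subvarieties of $G_m^n$, that $\rank\la a_1,\ldots,a_m\ra$ equals the dimension of the Zariski closure of the orbit $\{q_i\}$, which is what rigorously ties your geometric codimension count back to the arithmetic rank.
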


According to the fundamental theorem of abelian group, an abelian group can be written as the product of free part and torsion part, we define the free rank to be the rank of free part. The recurrence tuple of the Fibonacci sequence is $((1+\sqrt{5})/2,(1-\sqrt{5}/2),1/\sqrt{5},-1/\sqrt{5})$ Since the rank of the multiplicative group $\la (1+\sqrt{5})/2,(1-\sqrt{2})/2\ra$ is $1$, we would have infinitely many Fibonacci-Wieferich primes by assuming Conjecture~\ref{myconj} to be true. 

Geometric point of view on this problem could be explained by the terminology of dynamic systems. Let $V(K)$ be an variety over a number field $K$, and let $\phi:V(K)\to V(K)$ be a morphism of  $V$ into itself. For an integer $n\in \N$, denote by $\phi^n$ the $n$-th iterate $\phi\circ\cdots\circ\phi$; for any point $p\in V(K)$, we let $\orbit{\phi}{p}:=\{\phi^n(p)\}_n$ be a (forward) $\phi$-orbit of $p$. We are curious about what the relation between the sequence $\mathcal{O}_\phi(p)\mod\mathfrak{p}$ and $\mathcal{O}_\phi(p)\mod\mathfrak{p}^2$. We will talk more in the last section. 

Before Andrew Wile proved Fermat's Last Theorem, Wieferich primes and its generalizations are often connected to the first case of Fermat's Last Theorem. In 1909, Wieferich first showed that if a prime $p$ is the first case of Fermat's Last Theorem such that the Fermat's Last Theorem fails, then $p$ is a Wieferich prime\cite{Wieferich1909}. One years later, Mirimanoff shows that the same theorem also holds for base-$3$ Wieferich primes\cite{Mirimanoff1910}. Because of the hope of solving Fermat's Last Theorem, people had haven the motivation to know if the same theorem holds for any base-$m$ Wieferich primes. Some mathematicians, like Granville, Monagan\cite{Granville1988} and Suzuki\cite{Suzuki1994}, gave contributions. Zhi-Hong Sun and Zhi-Wei Sun\cite{SunSun1992} showed a similar theorem in 1992, which says that if Wall's conjecture is true, then the first case of Fermat's Last Theorem holds. So, Fibonacci-Wieferich primes are sometimes called \textbf{Wall-Sun-Sun primes} which are refer to potential counterexamples of Fermat's Last Theorem. The searching for Wall-Sun-Sun primes is still continuous, but nothing has been found up to $28\times 10^{15}$ reported by PrimeGrid\cite{PrimeGrid} in 2014. Although PrimeGrid has extended our searching to a such big number, it is not known if there are infinitely many primes for which \eqref{eq:2} is true or not unconditionally.

Silverman introduced a method to show that there are infinitely many non-Wieferich primes, assuming the traditional $abc$-conjecture, and our idea is to mimic Silverman's proof for our purpose. The $abc$-conjecture is very useful at dealing with this kind of questions. P. Ribenboim and G. Walsh\cite{PG1999} used the $abc$-conjecture to show that there are only finitely many powerful terms in the Lucas sequence and the Fibonacci sequence, and lately Minoru Yabuta\cite{Yabuta2007} further generalized this result.

I would like to briefly introduce Silverman's trick, and we should start with the traditional $abc$-conjecture.
\begin{conj}[The $abc$-conjecture on rational numbers]\label{traditionalabc}
if we have three integers $a,b,c\in \Z\setminus\{0\}$ which satisfy $a+b+c=0$, then for every $\varepsilon>0$ there is a constant $C_\varepsilon>0$ such that the following inequality holds
\[
\max\{|a|,|b|,|c|\}\leq C_\varepsilon rad(a,b,c)^{1+\varepsilon}
\]
where $rad(a,b,c):=\prod_{p|abc}p$. 
\end{conj}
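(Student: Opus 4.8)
\section*{Proof proposal for Conjecture~\ref{traditionalabc}}

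The statement is the classical $abc$-conjecture of Masser--Oesterl\'e, a celebrated open problem; the paper accordingly only \emph{assumes} it, so any ``proof'' is necessarily a program rather than a completed argument. Before anything else I would record that the inequality as written needs the usual coprimality hypothesis $\gcd(a,b,c)=1$: without it the claim is false, since $a=b=2^n$, $c=-2^{n+1}$ gives $\max=2^{n+1}$ while $\rad(a,b,c)=2$. So I assume $\gcd(a,b,c)=1$ throughout. The line of attack I would then take is the Frey--Hellegouarch reduction of the inequality to a height bound for an elliptic curve: writing the relation as $a+b=-c$, I attach to the coprime triple the Frey curve
\[
E:\quad y^2=x(x-a)(x+b),
\]
whose two-torsion points $0,a,-b$ have pairwise differences $a,\,b,\,a+b=-c$.

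The point of the construction is that the invariants of $E$ repackage exactly the quantities appearing in the conjecture. A direct computation gives the discriminant $\Delta_E=16\,(abc)^2$ and the weight-four invariant $c_4=16\,(a^2+ab+b^2)$; since $\max\{|a|,|b|\}\asymp H$ where $H:=\max\{|a|,|b|,|c|\}$, one finds $|c_4|\asymp H^2$, while the conductor $N_E$ is supported on the primes dividing $abc$ and satisfies $N_E\mid 2\,\rad(abc)$. The input I would invoke is the modified Szpiro conjecture,
\[
\max\{|c_4|^3,\,|c_6|^2\}\leq C_\varepsilon\,N_E^{\,6+\varepsilon},
\]
which is known to be equivalent to $abc$; it in particular bounds $|c_4|^3\asymp H^6$ by $C_\varepsilon N_E^{6+\varepsilon}\asymp C_\varepsilon\rad(abc)^{6+\varepsilon}$, and taking sixth roots yields
\[
H\leq C'_\varepsilon\,\rad(abc)^{1+\varepsilon},
\]
which is exactly the desired inequality. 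The only book-keeping is the passage to the minimal Weierstrass model, where the bounded powers of $2$ and $3$ in $\Delta_E$ and $N_E$ are absorbed into the constant. I stress that the $c_4$-normalised form is what yields the sharp exponent: the discriminant-only Szpiro bound $|\Delta_E|\le C_\varepsilon N_E^{6+\varepsilon}$ gives, via $|abc|\gtrsim H^2$ (two of the three terms being $\asymp H$), only the weaker $H\ll\rad(abc)^{3/2+\varepsilon}$.

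The main obstacle is blunt: this reduction merely trades one open problem for another, since Szpiro's conjecture is itself unproven and the known effective bounds carry exponents far larger than $6$. The most prominent attempt to close the gap is Mochizuki's inter-universal Teichm\"uller theory, which claims a Szpiro-type inequality through anabelian geometry and $p$-adic Hodge theory, but whose decisive step has been publicly disputed. A more elementary hope is to imitate the Mason--Stothers theorem, the exact polynomial analogue of $abc$, whose proof takes a single line because it differentiates the relation and forms a Wronskian; the fundamental difficulty is that $\Z$ carries no derivation strong enough to run that argument, and manufacturing a serviceable ``arithmetic derivative'' --- in the spirit of Buium's $p$-derivations or the Fermat quotient --- is precisely the unresolved heart of the matter. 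It is for exactly this reason that the conjecture figures here as a hypothesis under which the paper's theorems are proved, rather than as something established.
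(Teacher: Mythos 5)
There is nothing to compare here: the statement you were asked about is the Masser--Oesterl\'e $abc$-conjecture itself, which the paper states as Conjecture~\ref{traditionalabc} and never proves --- it is the standing hypothesis under which Theorems~\ref{mainresult:1} and~\ref{mainresult:2} are established. You correctly recognized this, and your write-up is an honest program rather than a claimed proof, which is the right posture. Two substantive points in your proposal deserve comment. First, your observation that the conjecture as printed is false without the hypothesis $\gcd(a,b,c)=1$ is correct and genuinely relevant: your triple $a=b=2^n$, $c=-2^{n+1}$ does satisfy $a+b+c=0$ with $\max\{|a|,|b|,|c|\}=2^{n+1}$ and $\rad(a,b,c)=2$, so the printed inequality fails for $n$ large. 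Note, though, that the number-field version the paper actually uses, Conjecture~\ref{abc}, is stated projectively, and its radical $\rad(x_1,x_2,x_3)$ counts only primes $\mathfrak{p}$ where the valuations $v_{\mathfrak{p}}(x_i)$ are not all equal; that formulation is invariant under scaling the triple and therefore does not need a coprimality hypothesis --- the defect is only in the ``traditional'' statement as transcribed. Second, your Frey--Hellegouarch reduction is the standard known equivalence and your computations check out: $\Delta_E=16(abc)^2$, $c_4=16(a^2+ab+b^2)$ with $|c_4|\asymp H^2$ by positive-definiteness of the form $a^2+ab+b^2$ on the line $a+b=-c$, and multiplicative reduction at every odd $p\mid abc$ since coprimality forces $p\nmid c_4$. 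The one inaccuracy is your claim $N_E\mid 2\rad(abc)$: the conductor exponent at $2$ for a non-normalized Frey curve can be as large as $8$, so only $N_E\mid 2^{8}\rad(abc)$ holds in general (or exponent $1$ at $2$ after the usual normalization $b\equiv 0 \bmod 16$, $a\equiv -1 \bmod 4$ of the triple); since you absorb bounded powers of $2$ and $3$ into $C_\varepsilon$ anyway, this is harmless. Your closing assessment --- that the reduction trades $abc$ for the equally open modified Szpiro conjecture, and that the Mason--Stothers Wronskian argument has no arithmetic analogue for lack of a derivation on $\Z$ --- matches the paper's framing, which cites Stothers~\cite{Stothers1981}, Mason~\cite{Mason1984}, and Vojta~\cite{Vojta1998} precisely to situate the conjecture as an assumed input.
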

This version is often called the $abc$-conjecture of Masser-Oesterl\'{e}-Szpiro. On the other hand, the $abc$-conjecture for function fields is a theorem of Stothers\cite{Stothers1981}. Note that Mason\cite{Mason1984} and Silverman\cite{Silverman1984} proved it several years later without known Stothers' result. In fact, the discovery of the $abc$-theorem for function fields is earlier than the $abc$-conjecture for rational numbers and then for number fields. 

The $abc$-conjecture is usually useful to solve arithmetic randomness question. Especially, it is useful to tell something about the squarefree part of a given sequence.
\begin{Def}
Let $N$ be an integer. We define $\kappa (N)$ to be the squarefree part of $N$, i.e.
\[
\kappa(N)=\prod_{p\parallel N}p.
\]
\end{Def}

For a sequence $X_n=a^n+b^n$ with $a,b\in\N$, we have, for $\varepsilon>0$,
\[
\max\{a^n,b^n,X_n\}\leq C\rad(a^nb^nX_n)^{1+\varepsilon}.
\]
The left hand side of the inequality increases as $n$ increases, so the right hand side should also increase. Note that $a^n$ and $b^n$ do not provide new prime factors, so the source of new primes comes from $X_n$. If we choose $\varepsilon$ sufficient small, we can see that the squarefree part cannot be bounded. Otherwise, the left hand side of the inequality will increase faster than the right hand side since we have $\rad(N^2)\geq N$ for any integer $N$.

The $abc$-conjecture over any arbitrary number field is a conclusion of Vojta's conjecture\cite{Vojta1998}. In 1998, Paul Vojta\cite{Vojta1998} formulated a new conjecture, and, as a consequence of this conjecture, the $abc$-conjecture and the $n$-tuple $abc$-conjecture on an arbitrary number field $K$ follows as special cases. One of many important achievement for Vojta's conjecture is that he generalized the $abc$-conjecture on rational numbers $\Q$ to number field $K$. Readers can also refer to \cite{GNT2013} which is an application of Vojta's $abc$-conjecture of universal form. 

We briefly list what we will do in each section. In Section 2, we will give an introduction of the $abc$-conjecture for number fields. Following, Section 3, we treat the most general cases and prove Theorem~\ref{mainresult:1}. Section 4 follows closely to \cite{Silverman1988} for getting a logarithm lower bound, Theorem~\ref{mainresult:2}. Finally, Section 5 talks about the heuristic of why the conjecture~\ref{myconj} is true from algebraic aspect and geometric aspect.

\section{The $abc$-conjecture for number fields}
We set the following:
\begin{enumerate}
\item $K$ is a number field, and let $\OK$ be the ring of integers of $K$ over $\Z$.
\item $\mathfrak{p}$ is a prime ideal of $K$, and let $I$ be an ideal of $K$.
\item $N_{K/\Q}(I):=|\OK/I|$. 
\item $G_K:=\{\sigma: K\to \C\mid \sigma\text{ is an embedding}\}$.
\item $V(f)$ is the zero set defined by the function $f$.
\item For an element $x\in K$, $N_{K/\Q}(x):=\prod_{\sigma\in G_K}\sigma(x)$.
\item We define $N(\cdot):=\log|N_{K/\Q}(\cdot)|/[K:\Q]$ where $[K:\Q]$ is the degree of $K$.
\item $M_K$ is the set of all finite and infinite places on $K$. For a finite place $v\in M_K$ and for a prime ideal $\mathfrak{p}$ which is correspondent to the place $v$, we denote $v$ by $v_{\mathfrak{p}}$ sometimes for clearing the correspondent relation, and we define
$$
\| x\|_{v_\mathfrak{p}}:=N_{K/\Q}(\mathfrak{p})^{-v_{\mathfrak{p}}(x)}\qquad\mbox{for every }x\in K.
$$
If $v\in M_K$ is a infinite place, then we define
$$
\| x\|_{v}:=|\sigma(x)|^e\qquad\text{with }|\cdot|\text{ the usual absolute value}
$$
for every non-conjugate embedding $\sigma\in G_K$ with $e=1$ if $\sigma$ is real, and $e=2$ if $\sigma$ is complex.
\item We define the absolutely logarithm height of any 3-tuples $(x_1,x_2,x_3)$ in $K^3\setminus \{0\}$ to be
$$
h(x_1,x_2,x_3):=\dfrac{1}{[K:\Q]}\sum_{v\in M_K}\log\max\{\| x_1\|_v,\| x_2\|_v,\| x_3\|_v\}.
$$
We also define the ideal of the point $(x_1,x_2,x_3)$ to be
$$
I_K(x_1,x_2,x_3):=\{\mathfrak{p}\subset\OK\mid \mbox{if }v_{\mathfrak{p}}(x_i)\neq v_{\mathfrak{p}}(x_j)\mbox{ for some }1\leq i,j\leq n\},
$$
and let
$$
\rad(x_1,x_2,x_3):=\dfrac{1}{[K:\Q]}\sum_{\mathfrak{p}\in I_K(x_1,x_2,x_3)}\log N_{K/\Q}(\mathfrak{p})=\sum_{\mathfrak{p}\in I_K(x_1,x_2,x_3)}\log N(\mathfrak{p}).
$$
\end{enumerate}
With all of these notation, the $abc$-conjecture says the following.
\begin{conj}\label{abc}
For any $\varepsilon>0$, there is a constant $C_{\varepsilon}$, depending only on $\varepsilon$, such that, for any tuple $(a,b,c)\in\mathbb{H}\setminus V(X)\cup V(Y)\cup V(Z)$ with $\mathbb{H}:=\{(X,Y,Z)\in \mathbb{P}^2(\bar{\Q})\mid X+Y+Z=0\}$, we have
$$
h(a,b,c)<(1+\varepsilon)\rad(a,b,c)+C_{\varepsilon}.
$$
\end{conj}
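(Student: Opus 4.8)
The final statement is the number field version of the $abc$-conjecture, which is an open problem; the paper treats it as a hypothesis (the ``$abc$-field'' assumption) rather than something to be established, and so there is no unconditional proof to aim for. The only route I would attempt is the conditional one already flagged in the introduction, namely deriving it as a special case of Vojta's conjecture \cite{Vojta1998}. The plan is to specialize Vojta's height inequality to the thrice-punctured line $\mathbb{P}^1\setminus\{0,1,\infty\}$ and to check that the resulting inequality is exactly the displayed bound $h(a,b,c)<(1+\varepsilon)\rad(a,b,c)+C_{\varepsilon}$.

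First I would set up the dictionary between $\mathbb{H}$ and $\mathbb{P}^1$. A point $(a,b,c)\in\mathbb{H}$ with $abc\neq 0$ determines $t=-a/c\in\mathbb{P}^1(K)\setminus\{0,1,\infty\}$ (so that $b/c=t-1$), and conversely. Under this identification the height $h(a,b,c)$ agrees with the usual Weil height $h(t)$ up to $O(1)$, while the set $I_K(a,b,c)$ of primes at which the valuations of $a,b,c$ differ is precisely the set of places where $t$ reduces into $\{0,1,\infty\}$; hence $\rad(a,b,c)$ equals the truncated (level-one) counting function $N^{(1)}_{\{0,1,\infty\}}(t)$ up to $O(1)$.

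Next I would invoke Vojta's conjecture on $X=\mathbb{P}^1$ with the reduced normal-crossings boundary $D=[0]+[1]+[\infty]$. Since $K_{\mathbb{P}^1}$ has degree $-2$, the divisor $K_X+D$ has degree $1$ and is ample, and the truncated form of Vojta's inequality reads $h_{K_X+D}(t)\le N^{(1)}_D(t)+\varepsilon\,h(t)+O(1)$ off a proper closed subset, which for a curve is a finite set harmlessly absorbed into the constant. Translating $h_{K_X+D}(t)=h(t)+O(1)$ through the dictionary turns this into $h(a,b,c)\le\rad(a,b,c)+\varepsilon\,h(a,b,c)+O(1)$; moving the $\varepsilon h$ term to the left and dividing by $1-\varepsilon$ yields $h(a,b,c)<(1+\varepsilon')\rad(a,b,c)+C_{\varepsilon'}$, which is the stated form.

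The hard part is not this bookkeeping but the input. Vojta's conjecture is itself unproven, and over number fields it is essentially equivalent in strength to the statement we are trying to prove, so this is a reduction between open problems rather than a genuine proof; I therefore expect the main obstacle to be that no unconditional argument exists, and the honest conclusion is that the statement must remain a hypothesis exactly as the paper uses it. The one piece of the reduction that requires real care is verifying that the radical defined here through $I_K(x_1,x_2,x_3)$ (difference of valuations) coincides with the truncated counting function at $\{0,1,\infty\}$, including the correct handling of the archimedean places and the normalization by $[K:\Q]$.
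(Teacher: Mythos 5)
You are right that this statement is a conjecture the paper assumes rather than proves: the paper supplies no proof and only remarks, exactly as you do, that it follows from Vojta's conjecture by choosing the divisor $[0]+[1]+[\infty]$ on $\mathbb{P}^1$. Your identification of it as an open hypothesis and your sketch of the Vojta reduction match the paper's treatment.
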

The $abc$-conjecture was first proposed by Joseph Oesterl\'{e} (1988), David Masser (1985) and Szpiro for the sake of creating an analogous theorem to Mason-Stothers theorem, which is the $abc$-theorem for function fields (see \cite{AT2002} for more historical introduction). The traditional $abc$-conjecture, Conjecture~\ref{traditionalabc}, is a special cases of Conjecture~\ref{abc}, which is a conclusion of Vojta's conjecture. A height function can be thought as a function to measure the arithmetic complexity of a given points on an affine, a projective space or a algebraic variety. With the language of height, the $abc$-conjecture says that the complexity of a given point on the hyperplane $X+Y+Z=0$ is bounded by those primes relative to the point, up to a constant and a scalar $1+\varepsilon$. 

Vojta's conjecture could be thought as an $abc$-conjecutre on algebraic geometry. A height function for a projective geometry depends on a divisor, and if we choose our divisor to be $[0]$, $[1]$ and $[\infty]$, Masser-Oesterl\'{e}-Szpiro conjecture follows as a special case. Vojta's conjecture also derives the $n$-tuples $abc$-conjecture for any number field. There is another version of $n$-tuple $abc$-conjecture for integers, which was proposed by J. Browkin and J. Brzezi\'{n}ski \cite{JJ1994}.


\section{There are infinitely many $X$-base Fibonacci-Wieferich primes}
We set the following notation for this section and for the following sections:
\begin{enumerate}
\item Let $(a_1,\ldots,a_m,b_1,\ldots,b_m)$ be a recurrence tuple. Without lose of generality, we may assume $a_i$ are all algebraic integers since this assumption does not affect the length of period.
\item Let $X=\{x_n\}_{n\geq 0}$ be a sequence generalized by the recurrence tuple $(a_1,\ldots,a_m,b_1,\ldots,b_m)$, i.e.
\[
x_n=b_1a_1^n+b_2a_2^n+\cdots b_ma_m^n\qquad\forall n\geq 0.
\]
\item We use $\order_{I^e}(\bar{a})$ to denote the order of $\bar{a}\in (\OK/I^e)^\times$ for a proper ideal $I$.
\item Let $\pi_X(I^e)$ be the length of the period of the sequence $\{x_n\mod I^e\}_{n\geq 0}$. Proposition~\ref{GNLS} shows that the $\pi_X$ is well-defined.
\end{enumerate}

The follows are some well-known factors about the notation we define above. First, The sequence $X$ is a recurrence sequence of degree $m$, i.e. $x_{n+m}=c_0x_n+c_1x_{n+1}+\cdots+c_{m-1}x_{n+m-1}$ with $c_i\in \bar{\Q}$ for all $n\geq 1$ with proper initial values. Second, an ideal $I$ of a number field $K$ where $K$ is the splitting field of the recurrence tuple $(a_1,\ldots,a_m,b_1,\ldots,b_m)$ could be expressed uniquely as a product of prime ideals $\mathfrak{p}_1^{e_1}\cdots\mathfrak{p}_r^{e_r}$.

Note that for all but finite many primes ideals are not degenerated relative to the recurrence tuple, and for those ideal $I$ not degenerate relative to the recurrence tuple, $\bar{a_1},\ldots, \bar{a_m}, \bar{b_1}\ldots,\bar{b_m}$ are unit in the ring $\OK/I^e\OK$ for any integer $e\geq 1$. Thus, we can consider the order of $a_i$ in the multiplicative group $(\OK/I^e\OK)^\times$. With all of these arguments, we are able to give the following theorem.
\begin{repproposition}{GNLS}
For any ideal $I=\mathfrak{p}_1^{e_1}\cdots\mathfrak{p}_r^{e_r}$ of $K$ not degenerate relative to the recurrence tuple $(a_1,\ldots,a_3,b_1,\ldots,b_m)$, $\{x_n\mod I^e\}$ is periodic. Then, we have
\[
\pi_X(I)=\lcm\{\order_{\mathfrak{p}_i^{e_i}}(a_j)\mid \text{for all }i, j\}=\pi_X(\mathfrak{p}_1^{e_1})\pi_X(\mathfrak{p_2}^{e_2})\cdots \pi_X(\mathfrak{p}_r^{e_r}).
\]
\end{repproposition}
\begin{proof}
By the Chinese Remainder Theorem, we can assume $I$ is the power of a prime ideal without lose of generality. Since the sequence $x_n=b_1a_1^n+\cdots b_ma_m^n$ is of degree $m$, the sequence $\{x_n\mod I\}$ is periodic if and only if the following system of equations holds.
\[
\begin{cases}
x_{\pi}&\e x_0\mod I^e\\
x_{\pi+1}&\e x_1\mod I\\
&\vdots\\
x_{\pi+m-1}&\e x_{m-1}\mod I
\end{cases}.
\]
It implies that
\[
\left[
\begin{matrix}
b_1 & b_2 & \cdots & b_m\\
b_1a_1 & b_2a_2 &\cdots & b_ma_m\\
\vdots & \vdots & \ddots & \vdots\\
b_1a_1^{m-1} &b_2a_2^{m-1} & \cdots & b_ma_m^{m-1}
\end{matrix}\right]
\begin{bmatrix}
a_1^{\pi}\\
a_2^{\pi}\\
\vdots\\
a_m^{\pi}
\end{bmatrix}
\e
\begin{bmatrix}
x_0\\
x_1\\
\vdots\\
x_{m-1}
\end{bmatrix}
\mod I
\]
where $\pi:=\pi_X(I)$. Since $I$ is not degenerate relative to $(a_1,\ldots,a_m,b_1,\ldots,b_m)$ and $a_i$s are all distinct, the matrix is invertible. Therefore, we have
\[
\begin{bmatrix}
a_1^{\pi}\\
a_2^{\pi}\\
\ldots\\
a_m^{\pi}
\end{bmatrix}
\e
\begin{bmatrix}
1\\
1\\
\vdots \\
1
\end{bmatrix}
\mod I.
\]
Hence, it is clear that $\pi\geq\lcm_i\{\order_{I}(a_i)\}$. The inequality for the other direction is trivial.
\end{proof}
A quick conclusion of this theorem is that the period should divide the order of the multiplicative group $(\OK/\mathfrak{p})^\times$ where $\mathfrak{p}$ is a unramified prime ideal, and this conclusion generalizes \cite{Wall1960}.
\begin{cor}\label{Cor:3}
Let $K$ be the splitting field of a rational tuple $(a_1,\ldots,a_m,b_1,\ldots,b_m)$, and let $\mathfrak{p}$ be a prime ideal of $K$ which is not degenerated relative to the tuple. Then, we have $\pi_X(\mathfrak{p})$ divides $N_{K/\Q}(\mathfrak{p})-1$. Let $p$ be an prime integer which is not degenerated relative to the tuple, and let $X$ be a rational sequence, i.e. every term $x_n$ is rational. Then, $\pi_X(p)$ divides $p^f-1$ where $f$ is the inertia degree of $p$ over $K$. If $p$ is ramified with ramified degree $e$, then $\pi_X(p)$ divides $p^{e-1}(p^f-1)$. 
\end{cor}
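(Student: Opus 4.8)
The plan is to deduce all three assertions from Proposition~\ref{GNLS} together with elementary facts about the unit groups $(\OK/\mathfrak{p}^e)^\times$. For the first statement I would apply Proposition~\ref{GNLS} with $I=\mathfrak{p}$, giving $\pi_X(\mathfrak{p})=\lcm_j\{\order_{\mathfrak{p}}(a_j)\}$. Since $\OK/\mathfrak{p}$ is a finite field with $N_{K/\Q}(\mathfrak{p})$ elements, its multiplicative group has order $N_{K/\Q}(\mathfrak{p})-1$, so by Lagrange each $\order_{\mathfrak{p}}(a_j)$ divides $N_{K/\Q}(\mathfrak{p})-1$; the least common multiple of divisors of $N_{K/\Q}(\mathfrak{p})-1$ again divides $N_{K/\Q}(\mathfrak{p})-1$, which settles this part.

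For the second and third assertions the first step is to pass from the rational modulus $p$ to the ideal $p\OK$. Because $X$ is a rational sequence and $\mathfrak{p}$ is non-degenerate (so each $x_n$ is $p$-integral), two rational integers are congruent modulo $p$ exactly when they are congruent modulo $p\OK$, as $p\OK\cap\Z=p\Z$; hence $\pi_X(p)=\pi_X(p\OK)$. Since $K$ is a splitting field, $K/\Q$ is Galois, so $p\OK=(\mathfrak{p}_1\cdots\mathfrak{p}_g)^{e}$ with all $\mathfrak{p}_i$ sharing the same ramification index $e$ and inertia degree $f$, and $N_{K/\Q}(\mathfrak{p}_i)=p^f$. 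Proposition~\ref{GNLS} then yields $\pi_X(p)=\lcm_{i,j}\{\order_{\mathfrak{p}_i^{e}}(a_j)\}$, so it suffices to bound each $\order_{\mathfrak{p}_i^{e}}(a_j)$. When $p$ is unramified ($e=1$) this is immediate from the first part applied to each $\mathfrak{p}_i$: the order divides $N_{K/\Q}(\mathfrak{p}_i)-1=p^f-1$, which proves the second assertion.

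The ramified case is where the real work lies. Fixing $\mathfrak{p}=\mathfrak{p}_i$ and writing $U^{(j)}=1+\mathfrak{p}^j$, I would factor the reduction map $(\OK/\mathfrak{p}^e)^\times\to(\OK/\mathfrak{p})^\times$; its kernel is the principal unit group $U^{(1)}/U^{(e)}$, a $p$-group, and its image has order $p^f-1$. Thus $\order_{\mathfrak{p}^e}(a_j)=d\cdot p^{s}$, where $d=\order_{\mathfrak{p}}(a_j)$ divides $p^f-1$ and $p^{s}$ is the order of $a_j^{d}$ inside $U^{(1)}/U^{(e)}$. It then remains to show $p^{s}\mid p^{e-1}$, i.e. that the exponent of $U^{(1)}/U^{(e)}$ divides $p^{e-1}$. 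This exponent bound is the main obstacle, and I expect to handle it by a valuation estimate for $p$-th powers: for $x\in\mathfrak{p}^j$ the expansion $(1+x)^p-1=px+\binom{p}{2}x^2+\cdots+x^p$ has every term of $\mathfrak{p}$-valuation at least $j+1$ (using $v_{\mathfrak{p}}(p)\geq 1$, $p\mid\binom{p}{k}$ for $1\le k\le p-1$, and $pj\ge j+1$), so the $p$-power map carries $U^{(j)}$ into $U^{(j+1)}$. Iterating $e-1$ times sends $U^{(1)}$ into $U^{(e)}$, giving $(1+x)^{p^{e-1}}\equiv 1\pmod{\mathfrak{p}^e}$ for all $x\in\mathfrak{p}$, and hence $s\le e-1$. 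Combining, $\order_{\mathfrak{p}^e}(a_j)=d\,p^{s}$ divides $(p^f-1)p^{e-1}$ (the two factors being coprime), and taking the least common multiple over all $i$ and $j$ shows $\pi_X(p)\mid p^{e-1}(p^f-1)$.
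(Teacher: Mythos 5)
Your proof is correct, and on the unramified statements it runs along the same lines as the paper: apply Proposition~\ref{GNLS}, use that $K/\Q$ is Galois so all primes above $p$ share the same $e$ and $f$, and invoke Lagrange in $(\OK/\mathfrak{p})^\times$. Where you genuinely diverge is the ramified case, and there your argument is the stronger one. The paper simply decomposes $(\OK/p\OK)^\times$ by CRT and asserts that the order of each component $(\OK/\mathfrak{p}_i^e)^\times$ is $p^{e-1}(p^f-1)$; but that group actually has order $N(\mathfrak{p}_i)^{e-1}(N(\mathfrak{p}_i)-1)=p^{f(e-1)}(p^f-1)$, so a bare appeal to Lagrange only yields the weaker divisor $p^{f(e-1)}(p^f-1)$ when $f>1$. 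Your route --- splitting $\order_{\mathfrak{p}^e}(a_j)$ into its prime-to-$p$ part (bounded by $p^f-1$ via reduction mod $\mathfrak{p}$) and its $p$-part (bounded by the exponent of $U^{(1)}/U^{(e)}$), and then showing via the binomial estimate $v_{\mathfrak{p}}\bigl((1+x)^p-1\bigr)\geq j+1$ for $x\in\mathfrak{p}^j$ that the $p$-th power map carries $U^{(j)}$ into $U^{(j+1)}$ --- is exactly what is needed to obtain the exponent bound $p^{e-1}$ and hence the divisor $p^{e-1}(p^f-1)$ as stated. In short, your proposal not only proves the corollary but repairs the one step of the paper's proof that, read literally, does not deliver the claimed bound.
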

\begin{proof}
Obviously, the length of period is independent of the choice of fields, i.e. $\pi_X(\mathfrak{p}\OK)=\pi_X(\mathfrak{p}\mathcal{O}_L)$ for $K\subseteq L$. By the proposition~\ref{GNLS}, it trivially implies the first part of the corollary.

For the second part, since $K$ is Galois over $\Q$, the prime decomposition of $p\OK=\mathfrak{p}_1^{e}\cdots\mathfrak{p}_r^e$ where the inertia degrees of $\mathfrak{p}_i$ are same for all $i$. It implies
\[
(\OK/p\OK)^\times=(\OK/\mathfrak{p}_1^e)^\times\otimes\cdots\otimes (\OK/\mathfrak{p}_r^e)^\times,
\]
and it is clear that the multiplicative order of elements in $(\OK/p\OK)^\times$ divides the order of a component $(\OK/\mathfrak{p}_1^e)^{\times}$ which is $p^{e-1}(p^f-1)$ where $f$ is the inertia degree.
\end{proof}
Since for all but finitely many prime integers $p$ are not degenerate relative to the recurrence tuple, Corollary~\ref{Cor:3} holds for all but finitely many primes $p$. Proposition~\ref{GNLS} also indicates that $X$-base Fibonacci-Wieferich primes are generalized Wieferich primes. It generalizes in the sense that, instead of considering the order of a single element, we consider the order of a set of elements. Therefore, a prime which is not an $\A$-base Wieferich prime is also not an $X$-base Fibonacci-Wieferich prime where $\A$ is one of the generators of $X$. The following lemma proves this idea.
\begin{lem}~\label{fxxk}
Let $\mathfrak{p}$ be a prime which is not degenerate relative to the recurrence tuple. If we have
\[
a_i^n-1\in\mathfrak{p}\qquad\text{and}\qquad a_i^n-1\not\in\mathfrak{p}^2
\]
for some integer $n$ and $i$, then we have $\pi_X(\mathfrak{p})\neq\pi_X(\mathfrak{p}^2)$.
\end{lem}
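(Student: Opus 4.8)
The plan is to reduce the statement to a comparison of multiplicative orders via Proposition~\ref{GNLS}, and then to exploit the structure of the kernel of the reduction map $(\OK/\mathfrak{p}^2)^\times\to(\OK/\mathfrak{p})^\times$. First I would apply Proposition~\ref{GNLS} with $I=\mathfrak{p}$ and with $I=\mathfrak{p}^2$ to write
\[
\pi_X(\mathfrak{p})=\lcm_j\{\order_{\mathfrak{p}}(a_j)\}\qquad\text{and}\qquad\pi_X(\mathfrak{p}^2)=\lcm_j\{\order_{\mathfrak{p}^2}(a_j)\}.
\]
Abbreviating $d_j:=\order_{\mathfrak{p}}(a_j)$ and $D_j:=\order_{\mathfrak{p}^2}(a_j)$, the two hypotheses translate cleanly into divisibility statements about the distinguished index $i$: the relation $a_i^n-1\in\mathfrak{p}$ says exactly $d_i\mid n$, while $a_i^n-1\notin\mathfrak{p}^2$ says exactly $D_i\nmid n$. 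Reduction modulo $\mathfrak{p}$ always gives $d_j\mid D_j$, so $\pi_X(\mathfrak{p})\mid\pi_X(\mathfrak{p}^2)$ for free, and it remains to produce a strict inequality.

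The key structural step is to understand how much larger $D_j$ can be than $d_j$. Let $p$ be the rational prime below $\mathfrak{p}$. The kernel of $(\OK/\mathfrak{p}^2)^\times\to(\OK/\mathfrak{p})^\times$ is $1+\mathfrak{p}/\mathfrak{p}^2$, and the map $1+x\mapsto x$ identifies it with the additive group $\mathfrak{p}/\mathfrak{p}^2\cong\OK/\mathfrak{p}$, an elementary abelian $p$-group of exponent $p$. Since $a_j^{d_j}$ lies in this kernel, raising to the $p$-th power gives $a_j^{p d_j}\e 1\mod\mathfrak{p}^2$, so $D_j\mid p d_j$; together with $d_j\mid D_j$ this forces the dichotomy $D_j\in\{d_j,\,p d_j\}$ for every $j$. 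Applying this to the index $i$ and combining with $d_i\mid n$ but $D_i\nmid n$, I conclude $D_i\neq d_i$, hence $D_i=p\,d_i$.

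Finally I would compare $p$-parts. Because $d_j=\order_{\mathfrak{p}}(a_j)$ divides $|(\OK/\mathfrak{p})^\times|=N_{K/\Q}(\mathfrak{p})-1$, which is coprime to $p$, every $d_j$ is coprime to $p$, so $p\nmid\lcm_j d_j=\pi_X(\mathfrak{p})$. On the other hand $p\mid D_i=p\,d_i$, so $p\mid\lcm_j D_j=\pi_X(\mathfrak{p}^2)$. Thus $p$ divides $\pi_X(\mathfrak{p}^2)$ but not $\pi_X(\mathfrak{p})$, giving $\pi_X(\mathfrak{p})\neq\pi_X(\mathfrak{p}^2)$ as desired. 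The only genuinely substantive input is the identification of the reduction kernel with $(\OK/\mathfrak{p},+)$ and the resulting ``order jumps by exactly a factor of $p$'' dichotomy; this, paired with the coprimality of the $d_j$ to $p$, is precisely what converts a single-element Wieferich-type condition on $a_i$ into an honest increase in the $p$-adic valuation of the $\lcm$, so I expect that identification to be the main (and essentially only) obstacle, the remaining arithmetic being bookkeeping.
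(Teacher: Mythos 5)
Your proof is correct, and it is in fact more complete than the one the paper gives. The paper disposes of this lemma in one line by contraposition: assuming $\pi:=\pi_X(\mathfrak{p})=\pi_X(\mathfrak{p}^2)$, it asserts that the required membership in $\mathfrak{p}^2$ is ``trivially true by Proposition~\ref{GNLS}'' and stops. But the hypothesis of the lemma concerns an arbitrary exponent $n$ with $a_i^n-1\in\mathfrak{p}$, not just $n=\pi$, and passing from ``$\order_{\mathfrak{p}^2}(a_i)$ divides $\lcm_j\order_{\mathfrak{p}}(a_j)$'' to ``$\order_{\mathfrak{p}^2}(a_i)$ divides every multiple of $\order_{\mathfrak{p}}(a_i)$'' is not a formality: with $d_j:=\order_{\mathfrak{p}}(a_j)$ and $D_j:=\order_{\mathfrak{p}^2}(a_j)$, the bare facts $d_j\mid D_j$ and $\lcm_j D_j=\lcm_j d_j$ do not force $D_i=d_i$ for abstract integers (take $d_1=2$, $d_2=3$, $D_1=6$, $D_2=3$). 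What rescues the argument is exactly the pair of structural inputs you isolate: the kernel of $(\OK/\mathfrak{p}^2)^\times\to(\OK/\mathfrak{p})^\times$ is $1+\mathfrak{p}/\mathfrak{p}^2\cong\OK/\mathfrak{p}$, elementary abelian of exponent $p$, giving the dichotomy $D_j\in\{d_j,\,pd_j\}$; and each $d_j$ divides $N_{K/\Q}(\mathfrak{p})-1$, hence is prime to $p$. Your comparison of $p$-parts of the two lcm's then yields the strict inequality cleanly. So the overall strategy is the same as the paper's (reduce periods to multiplicative orders via Proposition~\ref{GNLS}), but your write-up supplies the content that the paper's ``trivially true'' quietly relies on; no changes are needed.
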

\begin{proof}
It is equivalent to show that if $\pi:=\pi_X(\mathfrak{p})=\pi_X(\mathfrak{p}^2)$, then we will have $a_i^\pi-1\in\mathfrak{p}^2$ for all $n$ and $i$ satisfying $a_i^n-1\in\mathfrak{p}$ which is trivially true by Proposition~\ref{GNLS}.
\end{proof}
\begin{reptheorem}{mainresult:1}
Let $K$ be the splitting field of $X=\{a_1,\ldots,a_m\}$. Then, there are infinitely many non-$X$-base Fibonacci-Wieferich primes, assuming $K$ is an $abc$-field.
\end{reptheorem}
\begin{proof}
 Without lose of generality, we may also assume that $\A=a_i$ is in the ring of integers $\OK$. By lemma~\ref{fxxk}, it is sufficient to show that, for any $i$, there are infinitely many non-$a_i$-base Wieferich primes, or equivalently
\[
U_n:=\prod_{\mathclap{\substack{\mathfrak{p}\\ 
						  		\A^n-1\in\mathfrak{p}\\
                          		\A^n-1\not\in\mathfrak{p}^2}}}
\mathfrak{p}
\]
is unbounded. For the sake of contradiction, we assume $N(U_n)<B$ for some constant $B$, and also write $(\A^n-1)=U_nV_n$ for some ideal $V_n$. By the definition of the height function, we have a natural inequality
\[
N(U_nV_n)\leq h(\A^n,\A,(\A)^n-1).
\]
On the other hand, using Conjecture~\ref{abc}, for any $\varepsilon>0$, there exists a constant $C_{\varepsilon}$ such that
\[
h(\A^n,1,\A^n-1)\leq (1+\varepsilon)\rad(\A^n,1,\A^n-1)+C_{\varepsilon}.
\]
Note that
\[
\rad(\A^n,1,\A^n-1)=\sum_{\A(\A^n-1)\in\mathfrak{p}}N(\mathfrak{p})\leq N(\A)+N(U_n)+\dfrac{1}{2}N(V_n),
\]
so there exists a constant $C_{\varepsilon,\A}$ such that
\begin{equation}
\dfrac{(1-\varepsilon)N(V_n)}{2}\leq\varepsilon N(U_n)+C_{\varepsilon,\A}. \label{linlousu}
\end{equation}
However, for $\varepsilon<1$, \eqref{linlousu} could not hold once the $N(U_n)$ is bounded. Hence, we conclude that $U_n$ is unbounded.
\end{proof}
\section{A logarithm lower bound}
We define the following for this section and the follows:
\begin{enumerate}
\item Let $\gamma\in K$ with $|\gamma|\neq 1$.
\item The height function $h:K\to\R_{\geq 0}$ of any element $\gamma\in K$ is defined as
\[
h(\gamma):=\dfrac{1}{[K:\Q]}\sum_{v\in M_K}\log\max\{\|\gamma\|_v,1\}.
\]
\item The approximation function $\lambda_v:K\to\R_{\geq 0}$ with respect to a place $v$ is defined as, for every $\gamma$,
\[
\lambda_{v}(\gamma)=\dfrac{1}{[K:\Q]}\log\max\{\|\gamma\|_v,1\}.
\]
\item Let $M_K^{\infty}$ be the set of all infinity places of $K$.
\item $\varphi$ is the Euler totient function.
\item $\Phi_n(S)$ is the $n$-th cyclotomic polynomial.
\item Let $W_\gamma(B)=\{\text{prime ideal }\mathfrak{p}\mid N(\mathfrak{p})\leq \log B,\text{ and }\mathfrak{p}\text{ is not a } \gamma-\text{base Wieferich prime}\}$.
\item $m_\mathfrak{p}:=\order_\mathfrak{p}(\gamma)$ if $v_\mathfrak{p}(\gamma)=0$.
\end{enumerate}
To easy notation, we write
\begin{align*}
(\gamma^n-1)=\mathfrak{u}_n\mathfrak{v}_n\mathfrak{w}_n^{-1}\qquad&\text{with}\quad \mathfrak{u}_n=\kappa((\A^n-\B^n));\\
(\Phi_n(\gamma))=\mathcal{U}_n\mathcal{V}_n\mathcal{W}_n^{-1}\qquad&\text{with}\quad \mathcal{U}_n=\kappa((\A^n-\B^n)).
\end{align*}
By Theorem~\ref{GNLS} and Lemma~\ref{fxxk}, we have
\[
\{\text{prime ideal }\mathfrak{p}|\mathfrak{p} \text{ is not an }X-\text{ base Fibonacci-Wieferich Prime}\}=\bigcup_{i=1}^mW_{a_i}(\infty)
\]
with $X=\{a_1,\ldots,a_m\}$. Thus, an logarithm lower bound to $|W_{a_i}|$ for some $i$ is also a lower bound to non-$X$-base Fibonacci-Wieferich primes. Write $(\gamma)=IJ^{-1}$ where $I$ and $J$ are coprime ideals. We should note that the radical of the ideal $\mathfrak{w}_n$ and  the ideal $\mathcal{W}_n$ are both contained in the radical of $J$ since the poles of $(\gamma^n-1)$ and $(\Phi_n(\gamma))$ are the poles of $(\gamma)$. Since $W_\gamma(B)=W_{\gamma^{-1}(B)}$, we can further assume $\lambda_v(\gamma)>0$ for some $v\in M_K^{\infty}$ without lose of generality.
\begin{lem}\label{lm:4.1}
If $ (n)IJ+\mathfrak{p}=\OK$ and $\mathfrak{p}\supseteq \mathcal{U}_n$, then we have
\[
m_\mathfrak{p}=n\qquad\text{and}\qquad\gamma^{N(\mathfrak{p})-1}\not\e 1\mod\mathfrak{p}^2.
\]
\end{lem}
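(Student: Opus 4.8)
The plan is to mirror the integer argument sketched for the binary case (the commented Lemma~5.2 analogue): deduce the exact order of $\gamma$ modulo $\mathfrak{p}$ from the cyclotomic factorization together with separability, and then extract the failure of the Wieferich congruence from the squarefree hypothesis via a binomial expansion modulo $\mathfrak{p}^2$.

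First I would record what the coprimality hypothesis buys us. Since $(n)IJ+\mathfrak{p}=\OK$, the prime $\mathfrak{p}$ divides neither the numerator $I$ nor the denominator $J$ of $(\gamma)=IJ^{-1}$, so $v_{\mathfrak{p}}(\gamma)=0$ and $m_{\mathfrak{p}}=\order_{\mathfrak{p}}(\gamma)$ is defined; it also gives $\mathfrak{p}\nmid n$. Because $\mathfrak{p}\supseteq\mathcal{U}_n$ and $\mathcal{U}_n\mid(\Phi_n(\gamma))$, reducing the identity $\gamma^n-1=\prod_{d\mid n}\Phi_d(\gamma)$ modulo $\mathfrak{p}$ shows $\gamma^n\e 1\mod\mathfrak{p}$, hence $m_{\mathfrak{p}}\mid n$. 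To sharpen this to $m_{\mathfrak{p}}=n$, I would use that $\mathfrak{p}\nmid n$ forces $X^n-1$ to be separable in the residue field $\OK/\mathfrak{p}$ (its derivative $nX^{n-1}$ is coprime to it), so the factors $\Phi_d$, $d\mid n$, are pairwise coprime modulo $\mathfrak{p}$. The reduction $\bar\gamma$ is a root of $\Phi_n$ modulo $\mathfrak{p}$, so it is a root of no other $\Phi_d$; equivalently $v_{\mathfrak{p}}(\Phi_d(\gamma))=0$ for every proper divisor $d\mid n$, whence $\gamma^d\not\e 1\mod\mathfrak{p}$ and the order is exactly $n$.

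For the second assertion the point is to pin down the valuation $v_{\mathfrak{p}}(\gamma^n-1)$ exactly. The squarefree hypothesis $\mathfrak{p}\supseteq\mathcal{U}_n=\kappa(\Phi_n(\gamma))$ means $v_{\mathfrak{p}}(\Phi_n(\gamma))=1$; combined with $v_{\mathfrak{p}}(\Phi_d(\gamma))=0$ for proper $d$ from the previous step, the factorization yields $v_{\mathfrak{p}}(\gamma^n-1)=1$. Now write $N(\mathfrak{p})-1=nk$, which is legitimate since $m_{\mathfrak{p}}=n$ divides the order $N(\mathfrak{p})-1$ of the unit group of the residue field. Setting $\varpi:=\gamma^n-1$, so $v_{\mathfrak{p}}(\varpi)=1$, I would expand
\[
\gamma^{N(\mathfrak{p})-1}=(1+\varpi)^k\e 1+k\varpi\mod\mathfrak{p}^2,
\]
since every binomial term $\binom{k}{j}\varpi^j$ with $j\geq 2$ lies in $\mathfrak{p}^2$. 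It remains to check $\mathfrak{p}\nmid k$: writing $\mathfrak{p}$ over the rational prime $p$ with inertia degree $f$, we have $k\mid N(\mathfrak{p})-1=p^f-1$, which is prime to $p$, so $p\nmid k$ and thus $v_{\mathfrak{p}}(k\varpi)=1$. Therefore $\gamma^{N(\mathfrak{p})-1}\not\e 1\mod\mathfrak{p}^2$.

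The hard part is the bookkeeping at the level of valuations rather than mere divisibility: one must convert ``$\mathfrak{p}$ divides the squarefree part $\mathcal{U}_n$'' into the exact statement $v_{\mathfrak{p}}(\Phi_n(\gamma))=1$, and guarantee that no other cyclotomic factor $\Phi_d$ with $d\mid n$ contributes to $v_{\mathfrak{p}}(\gamma^n-1)$, which is precisely where separability and the hypothesis $\mathfrak{p}\nmid n$ are indispensable. The remaining delicate point, that $k=(N(\mathfrak{p})-1)/n$ is prime to $\mathfrak{p}$, is a short consequence of $p\nmid p^f-1$, but it is essential: without it the linear term $k\varpi$ could fall into $\mathfrak{p}^2$ and the conclusion would collapse.
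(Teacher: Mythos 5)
Your proposal is correct and follows essentially the same route as the paper's own proof: use separability of $S^n-1$ modulo $\mathfrak{p}$ (guaranteed by $(n)IJ+\mathfrak{p}=\OK$) together with the cyclotomic factorization to get $m_{\mathfrak{p}}=n$, then read off $v_{\mathfrak{p}}(\gamma^n-1)=1$ from the squarefree part and conclude by the binomial expansion of $(1+\varpi)^{(N(\mathfrak{p})-1)/n}$ modulo $\mathfrak{p}^2$. Your write-up is in fact more careful than the paper's (which contains a sign/negation typo in the order computation and leaves the coprimality of $(N(\mathfrak{p})-1)/n$ to $p$ implicit), but the underlying argument is identical.
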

\begin{proof}
Since $\mathfrak{p}\supseteq\mathcal{U}_n$, we have $\Phi_n(\gamma)\e 0\mod\mathfrak{p}$, which also implies $\gamma^n\e 1\mod \mathfrak{p}$. Moreover, Since $(n)IJ+\mathfrak{p}=\OK$, the $S^n-1$ is separable modulo over $\mathfrak{p}$. Therefore, for all divisors $d$ of $n$, $\Phi_d(\gamma)\e 0\mod\mathfrak{p}$, i.e. $m_\mathfrak{p}=n$. 

Since $\mathfrak{p}^2\not\supseteq\mathcal{U}_n$, $\gamma^n\e 1+u\mathfrak{p}\mod\mathfrak{p}^2$ for some $u\in(\OK/\mathfrak{p})^\times$. It follows that
\[
\gamma^{N_{K/\Q}(\mathfrak{p})-1}\e(1+u\mathfrak{p})^{\frac{N_{K/\Q}(\mathfrak{p})-1}{n}}\e 1+\dfrac{N_{K/\Q}(\mathfrak{p})-1}{n}u\mathfrak{p}\mod\mathfrak{p}^2,
\]
which completes our proof.
\end{proof}
\begin{lem}\label{lm:4.2}
$|W_\gamma(B)|\geq|\{n\leq (\log B-\log 2)/h(\gamma)\mid N(\mathcal{U}_n)\geq nN(IJ)\}|$.
\end{lem}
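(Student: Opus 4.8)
The plan is to turn the count on the right into an injection into $W_\gamma(B)$: to each admissible $n$ I would attach a distinct prime ideal $\mathfrak{p}_n$ which is not a $\gamma$-base Wieferich prime and satisfies $N(\mathfrak{p}_n)\le\log B$. The whole construction rests on Lemma~\ref{lm:4.1}, which guarantees that \emph{any} prime $\mathfrak{p}\supseteq\mathcal{U}_n$ with $(n)IJ+\mathfrak{p}=\OK$ already has $m_\mathfrak{p}=n$ and $\gamma^{N(\mathfrak{p})-1}\not\equiv1\pmod{\mathfrak{p}^2}$. So there are only three things to arrange: (i) existence of such a coprime prime factor of $\mathcal{U}_n$; (ii) the size bound $N(\mathfrak{p}_n)\le\log B$; and (iii) injectivity of $n\mapsto\mathfrak{p}_n$.

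For (i), recall that $\mathcal{U}_n$ is the squarefree part of $(\Phi_n(\gamma))$, hence a product of \emph{distinct} primes. If every prime dividing $\mathcal{U}_n$ also divided $(n)IJ$, then $\mathcal{U}_n\mid\rad((n)IJ)$ and therefore
\[
N(\mathcal{U}_n)\le N\bigl(\rad((n)IJ)\bigr)\le N((n)IJ)=\log n+N(IJ),
\]
using $N((n))=\log n$ and additivity of $N$ on products of ideals. For $n$ large (with $N(IJ)>0$; the unit case needs the sharper comparison $N(\mathcal{U}_n)>\log n$) the hypothesis $N(\mathcal{U}_n)\ge nN(IJ)$ contradicts this, so some prime $\mathfrak{p}_n\mid\mathcal{U}_n$ is coprime to $(n)IJ$, and Lemma~\ref{lm:4.1} applies to it.

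Step (ii) is where the cutoff $(\log B-\log2)/h(\gamma)$ enters. Since $\Phi_n(S)\mid S^n-1$, the ideal $\mathcal{U}_n$ divides the numerator of $(\gamma^n-1)$, so
\[
N(\mathfrak{p}_n)\le N(\mathcal{U}_n)\le N\bigl(\mathrm{num}(\gamma^n-1)\bigr)\le h(\gamma^n-1),
\]
the final inequality holding because the finite places contribute exactly $N\bigl(\mathrm{num}(\gamma^n-1)\bigr)$ to $h((\gamma^n-1)^{-1})=h(\gamma^n-1)$, while the archimedean places contribute a nonnegative amount. Subadditivity of the height together with $h(\gamma^n)=nh(\gamma)$ gives $h(\gamma^n-1)\le nh(\gamma)+\log2$, and the constraint $n\le(\log B-\log2)/h(\gamma)$ then yields $N(\mathfrak{p}_n)\le\log B$, so $\mathfrak{p}_n\in W_\gamma(B)$.

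Finally (iii) is immediate: by Lemma~\ref{lm:4.1} the prime $\mathfrak{p}_n$ carries its index in $m_{\mathfrak{p}_n}=n$, so $\mathfrak{p}_n=\mathfrak{p}_{n'}$ forces $n=m_{\mathfrak{p}_n}=m_{\mathfrak{p}_{n'}}=n'$; distinct admissible $n$ therefore produce distinct primes of $W_\gamma(B)$, which gives the claimed lower bound. I expect the main obstacle to be step (ii): one must verify that $\mathcal{U}_n$, being the exactly-squarefree part rather than the full radical, still divides the numerator of $(\gamma^n-1)$ — this uses the factorization $\gamma^n-1=\prod_{d\mid n}\Phi_d(\gamma)$ localized away from the poles of $\gamma$, i.e. away from $J$, where everything is integral — and that this numerator norm is genuinely dominated by $h(\gamma^n-1)$ through the product formula. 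It is precisely this chain that pins down the additive constant $\log2$ and the exact shape of the upper limit on $n$.
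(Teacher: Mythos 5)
Your proof takes essentially the same route as the paper's: pick a prime factor $\mathfrak{p}_n$ of $\mathcal{U}_n$ coprime to $(n)IJ$, invoke Lemma~\ref{lm:4.1} to get $m_{\mathfrak{p}_n}=n$ and $\gamma^{N(\mathfrak{p}_n)-1}\not\equiv 1\pmod{\mathfrak{p}_n^2}$, bound the norm via $N(\mathfrak{p}_n)\leq h(\gamma^n-1)\leq nh(\gamma)+\log 2\leq\log B$, and use $m_{\mathfrak{p}_n}=n$ for injectivity. The only difference is that you actually justify the existence of the coprime prime factor in step (i) (the paper merely asserts it), and your parenthetical about the degenerate case $N(IJ)=0$ flags a genuine subtlety that the paper's version also leaves unaddressed.
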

\begin{proof}
Given an $n\leq (\log B-\log 2)/h(\gamma)$ with $N(\mathcal{U}_n)\geq nN(IJ)$, there exists a prime ideal $\mathfrak{p}_n\supseteq\mathcal{U}_n$ with $nIJ+\mathfrak{p}_n=1$ , so we have
\[
m_{\mathfrak{p}_n}=n\qquad\text{and}\qquad\gamma^{N_{K/\Q}(\mathfrak{p}_n)-1}\not\e 1\mod\mathfrak{p}_n^2
\]
by Lemma~\ref{lm:4.1}.

Since $n\leq(\log B-\log 2)/h(\gamma)$, we have 
\[
|N(\mathfrak{p}_n)|\leq h(\gamma^n-1)\leq nh(\gamma)+\log 2\leq \log B.
\]
Moreover, if we found $\mathfrak{p}_n=\mathfrak{p}_n'$, then
\[
n=m_{\mathfrak{p}_n}=m_{\mathfrak{p}_n'}=n'
\]
which shows that, for every integer $n$ with $N(\mathcal{U}_n)\geq nN(IJ)$, we can construct a unique prime ideal in $W_{\gamma}$.
\end{proof}

We also need to generalize Lemma 5 of \cite{Silverman1988}, which shows that we can find an absolutely constant $c>0$ such that $\Phi_n(a,b)\geq e^{c\varphi(n)}$ for all $n\geq 2$. The detail of Silverman's proof can be found in a lecture note \cite{Silverman1987} published by Springer-Verlag. In our case, $n\geq 2$ will be weaken to $n$ large enough, but an weaker lemma would not affect our final result because we only care about $n$ large enough.

This lemma is an easy application of equidistribution of primitive roots of unity. In general, if we let $f:\C\to\C$ be a continuous function, then 
\[
\dfrac{\sum_{(i,n)=1}f(\z_n^i)}{\phi(n)}\to \int_{\partial D_{1}(0)}f(z)dz
\]
where $\partial D_{1}(0)$ is the complex unit circle. Readers can find more details in \cite{BIR2008}.
\begin{lem}\label{lm:5}
We can find a constant $c>0$ such that
$$
N(\Phi_n(\gamma))\geq c\varphi(n).
$$
for all $n\gg 0$.
\end{lem}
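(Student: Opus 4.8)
The plan is to bound $N(\Phi_n(\gamma))$ from below by relating it, via the definition of the height/norm functions, to the product of the archimedean contributions $\prod_{v\in M_K^\infty}\|\Phi_n(\gamma)\|_v$, and then to estimate each such factor using the equidistribution of primitive $n$th roots of unity stated just before the lemma. Recall that $\Phi_n(S)=\prod_{(i,n)=1}(S-\zeta_n^i)$, so for a fixed embedding $\sigma\colon K\to\C$ we have $\sigma(\Phi_n(\gamma))=\prod_{(i,n)=1}(\sigma(\gamma)-\zeta_n^i)$, and hence
\[
\log|\sigma(\Phi_n(\gamma))|=\sum_{(i,n)=1}\log|\sigma(\gamma)-\zeta_n^i|.
\]
First I would apply the equidistribution statement with the continuous test function $f(z)=\log|\sigma(\gamma)-z|$ (suitably justified as integrable even though it has a logarithmic singularity on the unit circle), obtaining
\[
\frac{1}{\varphi(n)}\sum_{(i,n)=1}\log|\sigma(\gamma)-\zeta_n^i|\;\longrightarrow\;\int_{\partial D_1(0)}\log|\sigma(\gamma)-z|\,dz.
\]
By Jensen's formula this integral equals $\log\max\{|\sigma(\gamma)|,1\}=[K:\Q]\,\lambda_{v_\sigma}(\gamma)$, which is strictly positive for at least one archimedean place since we arranged $\lambda_v(\gamma)>0$ for some $v\in M_K^\infty$ and $|\gamma|\neq 1$.

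Next I would assemble these archimedean estimates into a lower bound for $N(\Phi_n(\gamma))$. Since $\Phi_n(\gamma)$ is an algebraic integer (the $a_i$ and hence $\gamma$ are integral, and $\Phi_n$ has integer coefficients), every finite place contributes a nonnegative amount to $\log|N_{K/\Q}(\Phi_n(\gamma))|$, so we have $N(\Phi_n(\gamma))\geq \frac{1}{[K:\Q]}\sum_{v\in M_K^\infty}\log\|\Phi_n(\gamma)\|_v$. Combining this with the equidistribution limit above, and using that $h(\gamma)=\sum_{v\in M_K^\infty}\lambda_v(\gamma)>0$ contributes a strictly positive archimedean sum, I would conclude that for every small $\eta>0$ there is an $n_0$ so that for $n\geq n_0$,
\[
N(\Phi_n(\gamma))\;\geq\;(h(\gamma)-\eta)\,\varphi(n).
\]
Choosing $\eta=h(\gamma)/2$ and $c=h(\gamma)/2>0$ then gives $N(\Phi_n(\gamma))\geq c\,\varphi(n)$ for all $n$ large enough, which is exactly the assertion.

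The main obstacle I anticipate is the rigorous handling of the logarithmic singularity of $f(z)=\log|\sigma(\gamma)-z|$ on the unit circle: the cited equidistribution result is phrased for continuous $f$, whereas here $f$ blows up at $z=\sigma(\gamma)/|\sigma(\gamma)|$ when $|\sigma(\gamma)|=1$, and is merely unbounded-but-integrable in nearby regimes. Since we assume $|\gamma|\neq 1$, for the relevant embeddings $\sigma(\gamma)$ lies strictly off the unit circle and $f$ is genuinely continuous on $\partial D_1(0)$, so the singularity does not actually occur and the cited result applies directly; I would make this observation explicit to dispose of the difficulty. A secondary technical point is quantifying the rate of convergence well enough to replace the limit by a uniform inequality for all large $n$, but since we only need an eventual bound with a slightly smaller constant, taking $c$ to be half of $h(\gamma)$ absorbs the error and no effective rate is required.
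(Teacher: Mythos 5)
Your proposal follows essentially the same route as the paper's own proof: expand $\log|\sigma(\Phi_n(\gamma))|=\sum_{(i,n)=1}\log|\sigma(\gamma)-\zeta_n^i|$, invoke equidistribution of primitive $n$th roots of unity against the test function $\log|\sigma(\gamma)-z|$, evaluate the limiting integral by Jensen's formula as $\log\max\{|\sigma(\gamma)|,1\}$, and conclude that $N(\Phi_n(\gamma))/\varphi(n)$ tends to $\sum_{v\in M_K^\infty}\lambda_v(\gamma)>0$. Your added attention to the logarithmic singularity (and the reduction to archimedean places) is a refinement the paper itself omits, so the two arguments are substantively identical.
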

\begin{proof}
Since the primitive $n$-th roots of unity $\zeta_n^i$ for $(i,n)=1$ are equidistribution on the unit circle, we have
\[
N(\Phi_n(\gamma))=\dfrac{1}{[K:\Q]}\sum_{\sigma\in G_K}\sum_{(i,n)=1}\dfrac{\log (\zeta_n^i-\sigma(\gamma))}{\phi(n)}\to\dfrac{1}{[K:\Q]}\sum_{\sigma\in G_K}\int_{\partial D_1(0)}\log(z-\sigma(\gamma))dz
\]
when $n$ goes to infinity. By Jensen's formula, it implies
\[
\dfrac{N(\Phi_n(\gamma))}{\phi(n)}\to\sum_{v\in M_K^{\infty}}\lambda_v(\gamma)\qquad\text{as}\qquad n\to\infty,
\]
so $N(\Phi_n(\gamma))/\phi(n)$ greater than zero for some large enough $n$ which completes our proof.
\end{proof}
\begin{lem}\label{lm:7}
Fix $\delta>0$. Then
$$
|\{n\leq Y\mid \phi(n)\geq\delta n\}|\geq (\dfrac{6}{\pi^2}-\delta)Y+O(\log Y).
$$
(The big-O constant is absolute.)
\end{lem}
\begin{proof}
See \cite{Silverman1988}.
\end{proof}
\begin{lem}\label{lm:4.5}
If we assume $K$ is an $abc$-field, then, for all $\varepsilon>0$, there exists a constant $C_{\varepsilon,\gamma}$ satisfying 
\[
N(\mathcal{V}_n)\leq n\varepsilon h(\gamma)+C_{\varepsilon,\gamma}.
\]
\end{lem}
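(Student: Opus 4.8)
The plan is to reduce the statement to an upper bound on the powerful (non-squarefree) part $\mathfrak{v}_n$ of $\gamma^n-1$ and then feed Conjecture~\ref{abc} the triple attached to $\gamma^n-1$, exactly as in the proof of Theorem~\ref{mainresult:1}. The reduction rests on the divisibility $\mathcal{V}_n\mid\mathfrak{v}_n$: from the identity $\gamma^n-1=\prod_{d\mid n}\Phi_d(\gamma)$ one gets $v_{\mathfrak{p}}(\Phi_n(\gamma))\leq v_{\mathfrak{p}}(\gamma^n-1)$ at every finite place with $v_{\mathfrak{p}}(\gamma)=0$, so any prime occurring to exponent $\geq 2$ in $\Phi_n(\gamma)$ occurs to at least that exponent in $\gamma^n-1$. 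Hence $N(\mathcal{V}_n)\leq N(\mathfrak{v}_n)$, and it suffices to bound $N(\mathfrak{v}_n)$.

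First I would apply the $abc$-inequality to the point $(\gamma^n,-1,1-\gamma^n)\in\mathbb{H}$, which is admissible because $|\gamma|\neq 1$ forces $\gamma$ to be a non-root of unity, so $1-\gamma^n\neq 0$ for every $n$. This gives, for each $\varepsilon>0$, a constant $C_\varepsilon$ with $h(\gamma^n,-1,1-\gamma^n)\leq (1+\varepsilon)\rad(\gamma^n,-1,1-\gamma^n)+C_\varepsilon$. For the left-hand side I would use the elementary lower bound $h(\gamma^n,-1,1-\gamma^n)\geq h(\gamma^n)=nh(\gamma)$ obtained by dropping the third coordinate. For the radical I would split the numerator primes of $\gamma^n-1$ into the squarefree part $\mathfrak{u}_n$ and the powerful part $\mathfrak{v}_n$: the finitely many primes dividing $\gamma$ contribute a constant $C_\gamma$ independent of $n$, each prime of $\mathfrak{u}_n$ contributes $N(\mathfrak{p})$ once, and each prime of $\mathfrak{v}_n$ contributes $N(\mathfrak{p})\leq \tfrac12 v_{\mathfrak{p}}(\gamma^n-1)\log N(\mathfrak{p})$, whence $\rad(\gamma^n,-1,1-\gamma^n)\leq N(\mathfrak{u}_n)+\tfrac12 N(\mathfrak{v}_n)+C_\gamma$.

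The second ingredient is the total size of the numerator of $\gamma^n-1$. The finite contribution of the zeros of $\gamma^n-1$ equals $N(\mathfrak{u}_n)+N(\mathfrak{v}_n)$ and is bounded above by $h(\gamma^n-1)\leq nh(\gamma)+\log 2$. Combining the three estimates gives $nh(\gamma)\leq (1+\varepsilon)\bigl(N(\mathfrak{u}_n)+\tfrac12 N(\mathfrak{v}_n)+C_\gamma\bigr)+C_\varepsilon$, and subtracting this from $N(\mathfrak{u}_n)+N(\mathfrak{v}_n)\leq nh(\gamma)+\log 2$ eliminates $N(\mathfrak{u}_n)$ and leaves $\tfrac12 N(\mathfrak{v}_n)\leq \tfrac{\varepsilon}{1+\varepsilon}nh(\gamma)+C$. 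Therefore $N(\mathcal{V}_n)\leq N(\mathfrak{v}_n)\leq \tfrac{2\varepsilon}{1+\varepsilon}nh(\gamma)+C$, and replacing $\varepsilon$ by a small enough multiple of itself yields the stated inequality $N(\mathcal{V}_n)\leq n\varepsilon h(\gamma)+C_{\varepsilon,\gamma}$.

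The main obstacle is the archimedean and denominator bookkeeping needed to make the two crude height comparisons $h(\gamma^n,-1,1-\gamma^n)\geq nh(\gamma)$ and $N(\mathfrak{u}_n)+N(\mathfrak{v}_n)\leq nh(\gamma)+\log 2$ hold with error terms genuinely independent of $n$. This is precisely where the hypothesis $|\gamma|\neq 1$ is used: it prevents $|\sigma(\gamma)^n-1|$ from becoming arbitrarily small and so keeps the infinite-place contributions bounded. Everything else is the same squarefree-versus-powerful split already carried out in the proof of Theorem~\ref{mainresult:1}.
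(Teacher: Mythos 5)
Your proposal is correct and follows essentially the same route as the paper: reduce to the powerful part $\mathfrak{v}_n$ of $\gamma^n-1$ via $N(\mathcal{V}_n)\leq N(\mathfrak{v}_n)$, apply Conjecture~\ref{abc} to the triple built from $\gamma^n-1$, bound the radical by $N(\mathfrak{u}_n)+\tfrac12 N(\mathfrak{v}_n)$ plus a constant from the primes of $IJ$, and eliminate $N(\mathfrak{u}_n)$ using the height bound on the numerator of $\gamma^n-1$. Your bookkeeping (lower-bounding the triple height by $nh(\gamma)$ and bounding the numerator directly by $h(\gamma^n-1)\leq nh(\gamma)+\log 2$) is in fact slightly cleaner than the paper's, which routes the same cancellation through $h(\gamma^n-1)$ and the term $N(\mathfrak{w}_n)$.
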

\begin{proof}
Since $\mathfrak{v}_n\subseteq\mathcal{V}_n$, we only need to prove a similar estimate for $\mathfrak{v}_n$. Given $\varepsilon>0$, by Conjecture~\ref{abc}, there exists a constant $C_{\varepsilon}$ satisfying
\[
h(\gamma^n,1,\gamma^n-1)\leq (1+\varepsilon)\rad(\gamma^n,1,\gamma^n-1)+C_{\varepsilon}.
\]
On the left hand side, we have
\[
h(\gamma^n-1)\leq h(\gamma^n,1,\gamma^n-1).
\]
On the right hand side, we have
\[
(1+\varepsilon)\rad(\gamma^n,1,\gamma^n-1)\leq (1+\varepsilon)(N(IJ)+N(\mathfrak{u}_n)+\dfrac{N(\mathfrak{v}_n)}{2}).
\]
Because of $N(\mathfrak{u}_n)+N(\mathfrak{v}_n)-N(\mathfrak{w}_n)=\sum_{\sigma\in G_K}\log|\sigma(\gamma)^n-1|/[K:\Q]\leq h(\gamma^n-1)$, and we put the above two inequalities together, we have
\[
h(\gamma^n-1)\leq (1+\varepsilon)\left(N(IJ)+h(\gamma^n-1)+N(\mathfrak{w}_n)-\dfrac{N(\mathfrak{v}_n)}{2}\right)+C_{\varepsilon}.
\]
Therefore, we have
\begin{align*}
N(\mathfrak{v}_n)&\leq\dfrac{2\varepsilon}{1+\varepsilon}h(\gamma^n-1)+2(N(IJ)+N(\mathfrak{w}_n))+\dfrac{2}{(1+\varepsilon)}
C_{\varepsilon}\\ 
&\leq \dfrac{2\varepsilon n}{1+\varepsilon}h(\gamma))+\dfrac{2\varepsilon\log 2}{1+\varepsilon}+2(N(IJ)+N(\mathfrak{w}_n))+\dfrac{2}{(1+\varepsilon)}
C_{\varepsilon}.
\end{align*}
Replacing $C_{\varepsilon}$ by $C_{\varepsilon,\gamma}:=2(\varepsilon\log 2+C_{\varepsilon})/(1+\varepsilon)+2(N(IJ\mathfrak{w}_n))$, and replacing $\varepsilon$ by $\varepsilon/(2-\varepsilon)$, we have desire result.
\end{proof}
\begin{reptheorem}{mainresult:2}
For any non zero algebraic number $\gamma\in K\setminus\partial D_1(0)$ where $\partial D_1(0)$ is the complex unit circle with center at origin, there is a constant $C_{\gamma}$ such that we have
\[
|W_\gamma(B)|\geq C_{\gamma}\log B\qquad\forall B\gg 0
\]
assuming $K$ is an $abc$-field.
\end{reptheorem}
\begin{proof}
Let $c$ be an absolutely constant obtained from Lemma~\ref{lm:5}, and let $c_1,c_2,\ldots$ be constant not depended on $n$. By the Lemma~\ref{lm:4.2}, an integer $n\leq (\log B-\log 2)/h(\gamma)$ with $N(\mathcal{U}_n)>nN(IJ)$ can construct a unique prime $p_n$ in $W_\gamma(B)$. Thus, the lower bound for $|\{n\leq (\log B-\log 2)/h(\gamma)\mid N(\mathcal{U}_n)\geq nN(IJ)\}|$ is also a lower bound for $|W_\gamma(B)|$. We use Lemma~\ref{lm:4.5} and Lemma~\ref{lm:5} to estimate $N(\mathcal{U}_n)$. Let $C_{\varepsilon,\gamma}$ be the constant given in Lemma~\ref{lm:4.5}. Note that
\[
n\varepsilon h(\gamma)+N(\mathcal{U}_n)+C_{\varepsilon,\gamma}\geq N(\mathcal{V}_n)+N(\mathcal{U}_n)=|N(\Phi_n(\gamma))|,
\]
so we have
\begin{equation}\label{eq:4}
|N(\mathcal{U}_n)|\geq c\varphi(n)-n\varepsilon h(\gamma)-C_{\varepsilon,\gamma}
\end{equation}
for large enough $n$. Hence, if
\begin{equation}\label{eq:5}
c\varphi(n)-n\varepsilon h(\gamma)-c_2\geq nN(IJ)
\end{equation}
holds, then $N(\mathcal{U}_n)$ is greater than $nN(IJ)$. Thus, we have
\begin{equation}
|W_u(B)|\geq|\{n\leq (\log B-\log 2)/h(\gamma)\mid \mbox{\eqref{eq:5} holds.}\}|.
\end{equation}
\eqref{eq:5} is equivalent to
\[
c\varphi(n)-n\varepsilon h(\gamma)-c_2-nN(IJ)\geq 0.
\]
Now, we give some arbitrary fixed $\delta>0$, and let $\varepsilon=c\delta/2h(\gamma)$, and further suppose that $n$ satisfies $\varphi(n)\geq \delta n$. Then,
\begin{equation}\label{eq:6}
c\varphi(n)-n\varepsilon h(\gamma)-c_2- nN(IJ)\geq \dfrac{1}{2}c\delta n-c_2-n N(IJ),
\end{equation}
and there exists some integer $n_0:=n_0(\delta,c)$ only depended on $\delta$ and $c$ such that the right hand side of \eqref{eq:6}  is positive whenever $n\geq n_0$. Consequently, for those $n$ satisfying $n\geq n_0$ and $\phi(n)\geq \delta n$ simultaneously, it will also satisfy \eqref{eq:5}. Therefore, combining Lemma~\ref{lm:7}, we have
\begin{align*}
|W_u(B)|&\geq|\{n_0\leq n\leq (\log B-\log 2)/h(\gamma)\mid\varphi(n)\geq\delta n\}|\\
&\geq (\dfrac{6}{\pi^2}-\delta)(\log B-\log 2)/h(\gamma)+O(\log(\log B-\log 2)/h(\gamma))-n_0.
\end{align*}

Since we are free to choice $\delta>0$, the proof is completed.
\end{proof}
\begin{cor}
$|\{p\leq B\mid p\text{ is a Fibonacci-Wieferich prime}\}|\geq O(\log B)$.
\end{cor}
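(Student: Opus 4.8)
The plan is to obtain this corollary as the specialization of Theorem~\ref{mainresult:2} to the Fibonacci recurrence tuple $((1+\sqrt{5})/2,(1-\sqrt{5})/2,1/\sqrt{5},-1/\sqrt{5})$. I would take $\gamma=\alpha:=(1+\sqrt{5})/2$, the golden ratio. This $\alpha$ is a root of the monic polynomial $x^2-x-1$, hence an algebraic integer, and it generates $K=\Qd{5}$, which we assume to be an $abc$-field. Since $|\alpha|=(1+\sqrt{5})/2>1$, we have $\gamma\notin\partial D_1(0)$, so the single hypothesis of Theorem~\ref{mainresult:2} is met. Applying that theorem directly yields a constant $C_\alpha>0$ with $|W_\alpha(B)|\geq C_\alpha\log B$ for all $B\gg 0$.

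It then remains to translate the bound on $|W_\alpha(B)|$ into a statement about rational primes. By the identity $\{\mathfrak{p}\mid\mathfrak{p}\text{ is not an }X\text{-base Fibonacci-Wieferich prime}\}=\bigcup_{i}W_{a_i}(\infty)$ recorded in Section~4, together with Lemma~\ref{fxxk}, each prime ideal counted by $W_\alpha(B)$ is one at which $\pi_X(\mathfrak{p})\neq\pi_X(\mathfrak{p}^2)$; restricting to the Fibonacci case $X=\{\alpha,\bar\alpha\}$, these are exactly the prime ideals of $\Qd{5}$ lying over a rational prime $p$ for which $\pi(p)\neq\pi(p^2)$. One then passes from prime ideals $\mathfrak{p}$ with $N(\mathfrak{p})\leq\log B$ to the rational primes $p$ beneath them, accounting for the inertia degree (recall $N(\mathfrak{p})=\log N_{K/\Q}(\mathfrak{p})/[K:\Q]$ and $N_{K/\Q}(\mathfrak{p})=p^f$ with $f\in\{1,2\}$ in $\Qd{5}$); since each rational prime carries at most $[K:\Q]=2$ prime ideals, this change of count affects only the multiplicative constant and preserves the order-$\log B$ growth that the notation $\geq O(\log B)$ is meant to record.

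The genuine obstacle here is not analytic but a matter of which set is being counted, and I must flag a discrepancy in the statement as worded. Every tool invoked above---Theorem~\ref{mainresult:2}, Lemma~\ref{fxxk}, and the union identity of Section~4---produces lower bounds on the number of \emph{non}-Fibonacci-Wieferich primes, that is, primes $p$ with $\pi(p)\neq\pi(p^2)$, for which Wall's conjecture (Conjecture~\ref{Wall's}) holds. A Fibonacci-Wieferich prime, by the definition given in the introduction, is a prime at which Wall's conjecture \emph{fails}, i.e. $\pi(p)=\pi(p^2)$; these form the complementary set, which is conjecturally empty and of which none is known below $28\times 10^{15}$. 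No technique in this paper yields a lower bound on that complementary set, and a bound of order $\log B$ on it would in fact contradict Conjecture~\ref{Wall's}. I therefore read the corollary as missing the prefix ``non-'', and the statement that genuinely follows from Theorem~\ref{mainresult:2} is
\[
|\{p\leq B\mid p\text{ is a non-Fibonacci-Wieferich prime}\}|\geq C\log B\qquad\forall B\gg 0,
\]
for some constant $C>0$, under the assumption that $\Qd{5}$ is an $abc$-field.
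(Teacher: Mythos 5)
Your proposal is correct and follows exactly the route the paper intends: the corollary is stated without proof as the immediate specialization of Theorem~\ref{mainresult:2} to $\gamma=(1+\sqrt{5})/2$ in $K=\Qd{5}$ (assumed an $abc$-field), with the passage from prime ideals to rational primes absorbing only multiplicative constants. Your flag of the missing ``non-'' is also right --- the paper's own machinery (and its title) bounds the set of primes with $\pi(p)\neq\pi(p^2)$ from below, and the corollary as printed would contradict Conjecture~\ref{Wall's}, so it must be read as counting non-Fibonacci-Wieferich primes.
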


\section{Heuristic result and the conjecture}
\begin{enumerate}
\item Let $G_m:=K^\times=K\setminus\{0\}$, and let $G_m^n$ be $m$ folder multiplicative group where the product of two vectors is just the coordinates product.
\item Let $\la a_1,\ldots,a_m\ra$ be a multiplicative group generated by $a_i\in\bar{\Q}$, i.e. an element of the group could be written as a finite product $\prod_{i=1}^m a_i^{e_i}$ where $e_i$s are some integers.
\item By the fundamental theorem of abelian group, a multiplicative group $G$ is isomorphic to $\Z/m\Z\times\Z^r$ for some $m$ and $r$, where $\Z/m$ is the torsion part of $G$ and $\Z^r=\Z\times\Z\times\cdots\times\Z$ is the free part of $G$. We define the rank of $G$, $\rank G$, to be $r$.
\item Let $V$ be a smooth variety. Let $\dim V$ be the dimension of the variety $V$.
\item Let $\phi:V\to V$ be an smooth endomorphism, and let $q$ be a point in $V$. The orbit $\mathcal{O}_\phi(q)$ of $q$ under the map $\phi$ is a sequence 
\[
\{\phi^n(q)\}_{n\geq 0}
\]
where we use $\phi^n$ to represent that $\phi$ iterates $n$ times.
\end{enumerate}
I would like to separate this section into two subsections. Both subsections are for explaining the same thing. One of the subsections is from the arithmetic point of view to make the heuristic argument of Conjecture~\ref{myconj}, and the other is from the geometric point of view.
\subsection{Arithmetic point of view}
Fermat's little theorem says that $2^{p-1}\e 1\mod p$ for every prime $p$, so it must be that $2^{p-1}\e 1+k_pp\mod p^2$ for some $0\leq k_p\leq p-1$. If we are to assume that $k_p$ is distributed randomly between $0$ and $p-1$, the possibility of $k_p\e0\mod p$ would be $1/p$. Therefore, the expected number of Wieferich primes below $Y$ is given by
\[
\sum_{p\leq Y}1/p\approx \log\log Y.
\]
This is the well-known heuristic argument that why people conjugated that the number of Wieferich primes is infinite even though we only find few of them.

Given a number field $K$ and an element $\A\in K$, since $\A^{N(\mathfrak{p})-1}\e 1\mod\mathfrak{p}$, we must have
\[
\A^{N(\mathfrak{p})-1}\e 1+k_{\mathfrak{p}}\mathfrak{p}\mod\mathfrak{p}^2
\]
where $k_\mathfrak{p}\in\OK/\mathfrak{p}$. Supposing $k_\mathfrak{p}$ distributed randomly, we would have the possibility $1/N_{K/\Q}(\mathfrak{p})$ for getting $k_\mathfrak{p}\e 0\mod \mathfrak{p}$. Therefor, the expected number of $\A$-base Wieferich primes with norm below $Y$ is given by
\[
\sum_{\mathfrak{p};N_{K/\Q}(\mathfrak{p})\leq Y}\dfrac{1}{N_{K/\Q}(\mathfrak{p})},
\]
which is also tend to infinity as $Y$ goes to infinity.

Given a recurrence tuple $(a_1,\ldots,a_m,b_1\ldots,b_m)$, since $\pi_X(\mathfrak{p}^e)=\lcm_{1\leq i\leq m}\{\order_{\mathfrak{p}^e}(a_i)\}$, $\pi_X(\mathfrak{p})=\pi_X(\mathfrak{p}^2)$ happens if $k_{\mathfrak{p}}\e 0\mod\mathfrak{p}$ simultaneously for each $a_i$. We should also consider the case as $a_i$'s are not multiplicative dependent. For example, Let $\A$ be a quadratic unit, and let $\bar{\A}$ is its conjugates. Since $|\A\bar{\A}|=1$, the order of $\A$ modulo $\mathfrak{p}$ is depended on the other. Thus, the expected number of $X$-base Fibonacci-Wieferich primes with norm under $Y$ is given by
\[
\sum_{\mathfrak{p};N_{K/\Q}(\mathfrak{p})\leq Y}\dfrac{1}{N_{K/\Q}(\mathfrak{p})^r}
\]
where $r=\rank\la a_1,\ldots,a_m\ra$.
\subsection{Geometric points of view}
A recurrence sequence can be considered as a dynamic system. Let $q_i:=(x_i,x_{i+1},\ldots,x_{i+m})$ be a point on $\bar{\Q}^m$. Then, the matrix
\[
M:=
\left[\begin{matrix}
0 & 1 & 0 & \cdots & 0\\
0 & 0 & 1 & \cdots & 0\\
\ldots &\ldots &\ldots &\ddots &\ldots\\
0 & 0 & 0 & \cdots & 1\\
c_1 & c_2 & c_3 & \cdots & c_m
\end{matrix}\right]
\]
is a linear transformation for which $M(q_i)=q_{i+1}$ for all $i\geq 0$.  We should note that the length of the period of the sequence $X$ mod $\mathfrak{p}^e$ is equal to the length of the period of the orbit $\mathcal{O}_M(q_0)$ over modulo $\mathfrak{p}^e$. If we assume the Zarisky closure $\overline{\{q_i\}_{i\geq 0}}$ is of dimension $d$, it means that we can freely choice $d$ many coordinates, and the other coordinates are depended. Then, we want to ask the following question: if the $k_\mathfrak{p}$ is random distributed, what is the possibility of $\pi=\pi_X(p)=\pi_X(p^2)$? Equivalently, we want to know the possibility for that the congruence
\[
M^\pi(q_0)\e q_0\mod \mathfrak{p}^2
\]
holds. We definitely has
\[
M^\pi(q_0)\e \begin{pmatrix}
x_0+k_{0,\mathfrak{p}}\mathfrak{p}\\
x_1+k_{1,\mathfrak{p}}\mathfrak{p}\\
\vdots\\
x_{m-1}+k_{m-1,\mathfrak{p}}\mathfrak{p}
\end{pmatrix}
\mod \mathfrak{p}^2
\]
by the definition of $\pi$. Since the point is on a variety of dimension $d$, and there are $N_{K/\Q}(\mathfrak{p})$ many choices of $k_{i,\mathfrak{p}}$ for each $i$, the possibility of $k_{0,\mathfrak{p}}\e k_{1,\mathfrak{p}}\e\cdots\e k_{m-1,\mathfrak{p}}\e 0\mod \mathfrak{p}$ is $1/N_{K/\Q}(\mathfrak{p})^d$. Therefore, as long as the $\dim V=1$, we expect that there are infinitely many $X$-base Fibonacci-Wieferich primes.

The rank of the multiplicative group $\la a_1,\ldots,a_m\ra$ should be equal to the dimension of  $\overline{\{q_i\}_{i\geq 0}}$, and we are going to prove this as the end of this paper.

Our main idea is the following. Since the eigenvalues of our matrix $M$ are distinct, $M$ is diagonalizable. We can find an invertible matrix $B$ such that $M=B^{-1}AB$. Let $r_i=B(q_i)$. This implies $A(r_i)=r_{i+1}$, so we have a commutative diagram
\[
\begin{CD} 
q_i @>B>> r_i\\ 
@VVMV @VVAV\\ 
q_{i+1} @>B>> r_i+1 
\end{CD} .
\]
It is clear that the length of the period of the orbit $\mathcal{O}_M(q_0)$ modulo $\mathfrak{p}^e$ is equal to one of the orbit $\mathcal{O}_A(r_0)$ modulo $\mathfrak{p}^e$ for every prime $\mathfrak{p}$ and integer $e$. Moreover, if  $\prod_{j\in I} a_j=1$ or $-1$ where $I$ is some index set, then we have
\[
\prod_{j\in I}r_i^{(i)}=\prod_{j\in I} a_j^i\prod_{j\in I}r_0^{(j)}
\]
where we use $r^{(j)}$ to represent the $j$-th coordinate of a point $r$, i.e. $r_i$ belongs to the variety defined by the equation $\prod_{j\in I}Y_j=\pm \prod_{j\in I}r_0^{(j)}$ where $Y_i$s are variables.

We need two lemmas from \cite{laurent_equations_1984} and \cite{TY2005}.
\begin{lem}[Laurant]\label{Laurant}
Let $V\subseteq G_m^n$ be a subvariety. Let $\Gamma$ be a multiplicative subgroup of $G_m^n$. If $V\cap \Gamma$ is Zarisky dense in $V$, then $V$ is a subgroup of $G_m^n$.
\end{lem}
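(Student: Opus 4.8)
The plan is to recognize this as the multiplicative Mordell--Lang statement (Laurent's theorem) and to sketch its proof through the finiteness theory of $S$-unit equations, citing \cite{laurent_equations_1984} for the complete argument. First I would record that the hypothesis needs $\Gamma$ to be of finite rank (that is, $\Gamma\otimes\Q$ finite-dimensional); without this the claim is false, since $G_m^n(\bar{\Q})$ is already Zariski dense in every $V$. I would also note that the conclusion one actually obtains for irreducible $V$ is that $V$ is a translate $gH$ of a subtorus $H\subseteq G_m^n$, which is a genuine subgroup precisely when the translating element can be absorbed into $H$.

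For the reduction I set $W:=\overline{V\cap\Gamma}$; Zariski density forces $W=V$, and I may pass to an irreducible component, so assume $V$ irreducible. A saturation argument reduces the finite-rank case to $\Gamma$ finitely generated of some rank $r$; fixing generators modulo torsion I write each $\gamma\in\Gamma$ as $\gamma=\zeta\,g_1^{m_1}\cdots g_r^{m_r}$ with $\zeta$ torsion and $m_i\in\Z$.

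The heart of the argument is to show that if $V$ is \emph{not} a coset of a subtorus then $V\cap\Gamma$ cannot be Zariski dense. Substituting the above parametrization into equations cutting out $V$ converts the membership condition $\gamma\in V$ into a system of equations of the shape $\sum_j c_j u_j=1$ with the $u_j$ lying in a finitely generated group of $S$-units of a fixed number field. By the Subspace Theorem, such an equation has only finitely many nondegenerate solutions, while its degenerate solutions are exactly those confined to proper subtori; carrying this through all the defining relations forces $V$ itself to be a coset of a subtorus, contradicting the assumption. An alternative route replaces the Subspace Theorem by Bilu's equidistribution theorem together with Zhang's theorem on the essential minimum, which characterizes cosets of subtori as precisely the arithmetically degenerate subvarieties.

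The main obstacle is entirely concentrated in this Diophantine core: controlling the nondegenerate solutions of the associated $S$-unit equations uniformly across all the defining equations of $V$ is exactly the content of the Subspace Theorem, and it cannot be shortcut. The reduction to finite generation and the translation of variety membership into unit equations are routine bookkeeping by comparison.
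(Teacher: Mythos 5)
The paper's own ``proof'' of this lemma is a bare citation to \cite{laurent_equations_1984}, so your proposal --- which likewise defers the Diophantine core to Laurent while sketching the standard route through $S$-unit equations and the Subspace Theorem --- is essentially the same approach, and the sketch you give (reduce to $\Gamma$ finitely generated, parametrize $\Gamma$ by exponents, turn membership in $V$ into unit equations, and invoke finiteness of nondegenerate solutions) is the correct one. More importantly, your two caveats are genuine corrections to the statement as printed. First, a finiteness hypothesis on $\Gamma$ (finite rank, or at least finitely generated) is indispensable: taking $\Gamma$ to be all of $G_m^2$ and $V$ the curve $y=x+1$ inside $G_m^2$, the intersection $V\cap\Gamma$ is Zariski dense in $V$, yet $V$ is not a subgroup, since $(x_1+1)(x_2+1)\neq x_1x_2+1$ in general. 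Second, Laurent's theorem concludes that $\overline{V\cap\Gamma}$ is a finite union of cosets $g_iH_i$ of subtori, not that $V$ is a subgroup; one gets a subgroup only when a coset can be translated through the identity. In the paper's single application of the lemma (to the translate $r_0^{-1}W$ with $\Gamma=\la A\ra$ cyclic, so the missing finiteness hypothesis happens to be harmless there) the distinction between ``coset of a subtorus'' and ``subgroup'' still needs to be tracked, and your formulation handles both points correctly where the paper's statement does not.
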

\begin{proof}
See \cite{laurent_equations_1984}.
\end{proof}
The following lemma is a well-known result in Representation theory, and we only present the part we need.
\begin{lem}\label{final}
If a subvariety $V$ is a proper subgroup of $G_m^n$, then $V$ contains in the zero set of the polynomial $X_1^{e_1}\cdots X_n^{e_n}-1$ for some $e_i\in\Z$.
\end{lem}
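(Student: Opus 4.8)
The plan is to read Lemma~\ref{final} as the statement that a proper algebraic subgroup of the torus $G_m^n$ must lie in the kernel of a nontrivial character, and to prove it by combining the description of the characters of $G_m^n$ as Laurent monomials with Dedekind's theorem on linear independence of characters. Throughout I would work over $\bar{\Q}$, so that the coordinate ring of $G_m^n$ is the Laurent polynomial ring $R:=\bar{\Q}[X_1^{\pm 1},\ldots,X_n^{\pm 1}]$, whose monomials $X^e:=X_1^{e_1}\cdots X_n^{e_n}$, indexed by $e=(e_1,\ldots,e_n)\in\Z^n$, form a $\bar{\Q}$-basis of $R$ and are precisely the characters of $G_m^n$.

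First I would record the key reformulation. Because $V$ is a subgroup, the restriction $\chi_e:=X^e\restriction V$ of each monomial is a group homomorphism $V\to G_m$, i.e. a character of $V$. Since every coordinate function is a unit on the torus, each $X^e$ is a unit in $\bar{\Q}[V]=R/I(V)$, and a short computation shows that $\chi_e=\chi_f$ as functions on $V$ if and only if $X^{e-f}-1\in I(V)$. Hence the desired conclusion, that $V$ is contained in the zero set of $X^e-1$ for some $e\neq 0$, is equivalent to the assertion that two distinct monomials restrict to one and the same character of $V$.

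Next I would argue by contradiction: suppose that no nonzero $e$ satisfies $X^e-1\in I(V)$, so that the characters $\{\chi_e\}_{e\in\Z^n}$ are pairwise distinct. By Dedekind's lemma on linear independence of characters, distinct characters of the group $V(\bar{\Q})$ valued in $\bar{\Q}^\times$ are $\bar{\Q}$-linearly independent as functions on $V(\bar{\Q})$; since $V(\bar{\Q})$ is Zariski dense in $V$, this independence persists in $\bar{\Q}[V]$. Thus the restriction map $R\to\bar{\Q}[V]$ sends the basis $\{X^e\}$ to a linearly independent set and is therefore injective, forcing $I(V)=0$ and $V=G_m^n$. This contradicts the hypothesis that $V$ is proper, so some nonzero $e$ must satisfy $X^e-1\in I(V)$; clearing denominators turns this Laurent relation into a genuine polynomial $X_1^{e_1}\cdots X_n^{e_n}-1$ vanishing on $V$, which is the assertion of the lemma.

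The main obstacle, and the step deserving the most care, is the passage from ``the $\chi_e$ are distinct'' to ``the restriction map is injective.'' This rests on two facts that must be invoked correctly: the linear independence of distinct characters (the piece of representation theory to which the statement alludes), and the density of $V(\bar{\Q})$ in $V$, which is what lets one detect injectivity on honest points rather than on the scheme. Conceptually this is the lattice-duality statement that closed subgroups of $G_m^n$ correspond to quotients of the character lattice $\Z^n$, a proper subgroup corresponding to a nonzero sublattice of monomial relations; the argument above is simply the elementary incarnation of that duality. A minor bookkeeping point is that the relation produced may involve negative exponents, but on the torus any monomial identity $X^e=1$ is equivalent to the polynomial identity obtained by multiplying through by the monomial of the negative exponents, so the stated form is recovered.
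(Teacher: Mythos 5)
Your proof is correct and rests on the same two pillars as the paper's own argument: the identification of the characters of $G_m^n$ with the Laurent monomials $X_1^{e_1}\cdots X_n^{e_n}$, and Dedekind's linear independence of distinct characters (which is exactly the ``well-known result in representation theory'' the paper proves by hand). The only difference is organizational: you run a clean contrapositive --- no monomial relation in $I(V)$ forces the restriction map $R\to\bar{\Q}[V]$ to carry the monomial basis to a linearly independent set, hence $I(V)=0$ and $V=G_m^n$ --- whereas the paper proves the stronger structural fact that $I(V)$ is generated by elements $\chi-1$ with $\chi$ a character trivial on $V$, from which the lemma follows since $I(V)\neq 0$ for proper closed $V$.
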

\begin{proof}
Let $\Gamma$ be the functor from the category of varieties over $K$ to the finite $K$-algebra. Then, the regular functions of $G_m^n$ is
\[
\Gamma(G_m^n)=K[Y_1^{\pm},Y_2^{\pm},\cdots, Y_n^{\pm}]=\bigoplus_{(e_1,\ldots,e_n)\in\Z^n}KY_1^{e_1}\cdots Y_n^{e_n}.
\]

Let $X(V)=\{\text{morphism }\chi:V\to G_m\mid \chi\text{ is a group homomorphism}\}$, and we claim every elements in $X(V)$ is linear independent. Suppose it is not the case for the sake of contradiction, and then we can find $\chi_1,\ldots,\chi_n\in X(V)$ pairwise distinct linearly dependent elements with $n$ minimal for the property. Then, there exist $\lambda_1,\ldots,\lambda_{n-1}\in G_m$ such that
\[
\chi_n=\lambda_1\chi_1+\cdots\lambda_{n-1}\chi_{n-1}.
\]
Since $\chi_n\neq \chi_1$, we have $\chi_n(\A)\neq\chi_1(\A)$ for some $\A\in V$. Thus, for any $\B\in V$, we have
\[
\chi_n(\A)\chi_n(\B)=\lambda_1\chi_1(\A)\chi_1(\B)+\cdots+\lambda_{n-1}\chi_{n-1}(\A)\chi_{n-1}(\B)=\chi_n(\A)(\lambda_1\chi_1(\B)+\cdots+\lambda_{n-1}\chi_{n-1}(\B)).
\]
However, it implies
\[
\lambda_1(\chi_1(\A)-\chi_n(\A))\chi_1(\B)+\cdots\lambda_{n-1}(\chi_{n-1}(\A)-\chi_n(\A))\chi_{n-1}=0
\]
which is a contradiction to the assumption of $n$.

The morphism $\chi$ is called character, and it is well known that $\chi(X_1,\ldots,X_n)$ is of the form $X_1^{e_1}\cdots X_n^{e_n}$ for some $e_i\in\Z$. Considering the ideal of $V$, denoted by $I(V)$, we want to show that $I(V)=(\chi_1-1,\chi_2-1,\ldots,\chi_k-1)$ for some $\chi_i\in X(G_m^n)$. Given $f\in I(V)\setminus\{0\}$, we can find $\chi_1,\ldots,\chi_m\in X(G_m^n)$ pairwise distinct and $\lambda_1,\ldots,\lambda_m\in K$ such that
\[
f=\lambda_1\chi_1+\cdots\lambda_m\chi_m.
\]
Note that $\chi_i$s are naturally linearly independent. After reindexing the $\chi_i$ if necessary, we may find $1=i_1<i_2<\cdots<i_{l+1}=m+1$ such that, for any $i_j\leq r,s<i_{j+1}$, $ \chi_r\big|_V=\chi_s\big|_V$. Let $\theta_j:=\chi_{i_j}\big|_V$ where $\theta_j$s are pairwise linearly independent, and we then have
\[
0=f\big|_V=\sum_{j=1}^l(\sum_{i_j\leq i<i_{j+1}}\lambda_i)\theta_j.
\]
Hence, it follows that $\sum_{i_j\leq i<i_{j+1}}\lambda_i=0$ for all $j=1,\ldots,l$, or, equivalently,
\[
\lambda_{i_j}=-\sum_{i_j<i<i_{j+1}}\lambda_i.
\]
We set
\[
\mu_j:=\sum_{i_j<i<i_{j+1}} \lambda_i\chi_i\chi^{-1}_{i_j}-1,
\]
where we should note that $\chi_i\chi^{-1}_{i_j}\in X(G_m^n)$, so $f=\mu_1\chi_{i_1}+\cdots\mu_l\chi_{i_l}$ is generated by $\chi-1$ for some $\chi\in X(G_m^n)$. Since $\Gamma(G_m^n)$ is Noetherian, there exists $\chi_1,\ldots\chi_k$ such that $I(V)=(\chi_1-1,\ldots,\chi_k-1)$.
\end{proof}
\begin{prop}
$\rank\la a_1,a_2,\ldots,a_m\ra=\dim\overline{\{q_i\}}$.
\end{prop}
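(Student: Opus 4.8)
The plan is to compute $\dim\overline{\{q_i\}}$ by transporting the orbit into the torus $G_m^m=(K^\times)^m$ via the diagonalization already set up above, and then to read off its dimension from Laurent's theorem (Lemma~\ref{Laurant}) together with the explicit description of subgroups of a torus in Lemma~\ref{final}. First I would use $M=B^{-1}AB$ with $A=\operatorname{diag}(a_1,\dots,a_m)$ and $r_i=B(q_i)$; the commutative diagram gives $r_i=A^ir_0$, so the $j$-th coordinate is $r_i^{(j)}=a_j^i\,r_0^{(j)}$. Comparing $x_i=\sum_j b_ja_j^i$ with the first-coordinate relation $x_i=\sum_j r_i^{(j)}$ (the eigenvectors of a companion matrix have all first entries equal to $1$) and using that the $a_j$ are distinct identifies $r_0=(b_1,\dots,b_m)$. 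Since every $b_j\neq 0$ by the definition of a recurrence tuple, the forward orbit $\{r_i\}_{i\ge 0}$ lies in $G_m^m$, and it is exactly the forward orbit of $r_0$ under coordinatewise multiplication by $\mathbf a:=(a_1,\dots,a_m)$. Because $B$ is a linear isomorphism, $\dim\overline{\{q_i\}}=\dim\overline{\{r_i\}}=:\dim Z$, so it suffices to compute $\dim Z$.

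Next I would identify $Z$ as a coset of the Zariski closure of the cyclic group $\Gamma:=\la\mathbf a\ra\subseteq G_m^m$. Let $\tau$ be multiplication by $\mathbf a$, an automorphism of $G_m^m$. Since $\tau(\{r_i\}_{i\ge0})=\{r_i\}_{i\ge1}\subseteq\{r_i\}_{i\ge0}$, we have $\tau(Z)\subseteq Z$, giving a descending chain $Z\supseteq\tau Z\supseteq\tau^2Z\supseteq\cdots$ of closed sets; by Noetherianity it stabilizes, and applying a suitable power of $\tau^{-1}$ to the equality $\tau^NZ=\tau^{N+1}Z$ yields $\tau Z=Z$. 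Hence $Z$ is $\Gamma$-invariant, so $Z\supseteq\Gamma r_0$ and therefore $Z=\overline{\Gamma r_0}$. Translating by $r_0^{-1}$ shows $\dim Z=\dim\overline{\Gamma}$, and since $\Gamma$ is a group that is Zariski dense in $\overline{\Gamma}$ by construction, Lemma~\ref{Laurant} tells us that $H:=\overline{\Gamma}$ is an algebraic subgroup of $G_m^m$.

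Finally I would compute $\dim H$. A character $\chi_{\mathbf e}(X)=X_1^{e_1}\cdots X_m^{e_m}$ is trivial on $H$ if and only if it is trivial on the dense subgroup $\Gamma$, i.e.\ iff $\prod_j a_j^{e_j}=1$; so the characters annihilating $H$ are exactly the relation lattice
\[
L:=\Bigl\{\mathbf e\in\Z^m\,\Bigm|\,\textstyle\prod_j a_j^{e_j}=1\Bigr\}.
\]
By the proof of Lemma~\ref{final}, $I(H)$ is generated by the binomials $\chi_{\mathbf e}-1$ with $\mathbf e\in L$, so $H$ is precisely the subgroup of $G_m^m$ whose annihilator in the character lattice $\Z^m$ is $L$, whence $\dim H=m-\rank L$. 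On the other hand the surjection $\Z^m\twoheadrightarrow\la a_1,\dots,a_m\ra$, $\mathbf e\mapsto\prod_j a_j^{e_j}$, has kernel $L$, so $\Z^m/L\cong\la a_1,\dots,a_m\ra$ has free rank $\rank\la a_1,\dots,a_m\ra=m-\rank L$, torsion contributing nothing to the rank. Combining, $\dim\overline{\{q_i\}}=\dim H=m-\rank L=\rank\la a_1,\dots,a_m\ra$, as claimed.

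The step I expect to require the most care is the passage from the forward orbit $\{r_i\}_{i\ge0}$ to the full cyclic orbit, since Laurent's theorem applies to a genuine subgroup rather than a one-sided orbit; the Noetherian $\tau$-stabilization argument is what closes this gap. The other delicate point is the dictionary between the character-annihilator lattice of $H$ and the multiplicative relation lattice $L$ of the $a_j$, which is exactly where the free rank enters and where one must verify that the torsion of $\la a_1,\dots,a_m\ra$ does not affect the rank count.
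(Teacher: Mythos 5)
Your proposal is correct and follows essentially the same route as the paper: diagonalize $M$, transport the orbit into the torus $G_m^m$, invoke Laurent's theorem (Lemma~\ref{Laurant}) to see that the translated orbit closure is an algebraic subgroup, and read off its dimension from the character/relation lattice via Lemma~\ref{final}. You supply several details the paper leaves implicit --- the identification $r_0=(b_1,\dots,b_m)$, the Noetherian stabilization passing from the forward orbit to the full cyclic orbit, and the explicit count $\dim H=m-\rank L$ --- but the underlying argument is the same.
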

\begin{proof}
We would like to first assume that the $a_i$s are multiplicative independent, i.e. $\rank\la a_1,a_2,\ldots,a_m \ra=m$.

Let $A$ be the diagonal matrix for the matrix $M$, and we know
\[
A=
\left[
\begin{matrix}
a_1 & 0   & \cdots & 0\\
0   & a_2 & \cdots & 0\\
\ldots &\ldots & \ddots &\ldots\\
0 & 0 & \cdots & a_m
\end{matrix}\right].
\]
We then consider $A$ as a vector $(a_1,a_2,\ldots,a_m)\in G_m^m$ since all $a_i$s are not zero. Thus, $\{r_i\}=\{A^ir_0\}$ can be interpreted as the orbit of $r_0$ under the action of $\{A^i\}$ in the group $G_m^m$. Note that $r_0^{-1}$ is belong to $G_m^m$ since all $b_i$s are not zero.

Let $W$ be a nontrivial subvariety of $K^m$ such that $A^kr_0\in W$ for infinitely many $k$, and it is equivalent to say that $A^k\in r_0^{-1}W$ for infinitely many $k$ where the transferor variety is still an variety. Since $a_i$s are not root of unity, $\{A^k\}$ contains infinitely many distinct points; therefore, $r_0^{-1}W$ is a subgroup of $G_m^m$ by Lemma~\ref{Laurant}. However, since we assume $a_i$ are multiplicative independent, the above conclusion contradicts to Lemma~\ref{final}. Hence, $W$ is $K^m$ since $W$ has more than one element.

If $\la a_1,\ldots,a_m\ra$ are multiplicative dependent, we let $W$ be the subvariety defined by all multiplicative relations, and follow the same argument which also implies $W=\overline{\{q_i\}}$.
\end{proof}

For the abstract, this whole paper are some concrete cases of the following question. Let $K$ be an number field, $V$ be an variety over $K$, and $\phi:V\to V$ be an endormorphism. Given an initial point $q$ and a prime ideal $\mathfrak{p}$ of $K$, we want to know whether the length of the periodic cycle of the sequence $\{\phi^n(q)\mod \mathfrak{p}\}_{n\geq 0}$ is equal to the one of the sequence $\{\phi^n(q)\mod \mathfrak{p}^2\}_{n\geq 0}$ or not. The arithmetic information hides in the map $\phi$, and we should expect that there are infinitely many primes $\mathfrak{p}$ having different length of periods because of the $abc$-conjecture. However, for some certain map, the $abc$-conjecture could be useless. For example, if the map is ramified, then every factor of $\phi^n(p)$ will not be squarefree.
\bibliography{ABCFIB}

\providecommand{\bysame}{\leavevmode\hbox to3em{\hrulefill}\thinspace}
\providecommand{\noopsort}[1]{}
\providecommand{\mr}[1]{\href{http://www.ams.org/mathscinet-getitem?mr=#1}{MR~#1}}
\providecommand{\zbl}[1]{\href{http://www.zentralblatt-math.org/zmath/en/search/?q=an:#1}{Zbl~#1}}
\providecommand{\jfm}[1]{\href{http://www.emis.de/cgi-bin/JFM-item?#1}{JFM~#1}}
\providecommand{\arxiv}[1]{\href{http://www.arxiv.org/abs/#1}{arXiv~#1}}
\providecommand{\doi}[1]{\url{http://dx.doi.org/#1}}
\providecommand{\MR}{\relax\ifhmode\unskip\space\fi MR }
\providecommand{\MRhref}[2]{%
  \href{http://www.ams.org/mathscinet-getitem?mr=#1}{#2}
}
\providecommand{\href}[2]{#2}
\begin{thebibliography}{MBR08}

\bibitem[BB94]{JJ1994}
\bgroup\scshape{}J.~Browkin\egroup{} and
  \bgroup\scshape{}J.~Brzezi\'{n}ski\egroup{}, Some remarks on the
  $abc$-conjecture,  \emph{Mathematics of Computation} \textbf{62} (1994),
  931--939.

\bibitem[CDP97]{CDP1997}
\bgroup\scshape{}R.~E. Crandall\egroup{}, \bgroup\scshape{}K.~Dilcher\egroup{},
  and \bgroup\scshape{}C.~Pomerance\egroup{}, A search for {W}ieferich and
  {W}ilson primes,  \emph{Math. Comput.} \textbf{66} (1997), 433--449.

\bibitem[EJ10]{AJ2010}
\bgroup\scshape{}A.-S. Elsenhans\egroup{} and
  \bgroup\scshape{}J.~Jahnel\egroup{}, \emph{The {F}ibonacci sequence modulo
  $p^2$ -- {A}n investigation by computer for $p < 10^{14}$}, arXiv:1006.0824,
  June 2010.

\bibitem[GM88]{Granville1988}
\bgroup\scshape{}A.~Granville\egroup{} and \bgroup\scshape{}M.~B.
  Monagan\egroup{}, The first case of {F}ermat's {L}ast {T}heorem is true for
  all prime exponents up to 714,591,416,091,389,  \emph{Transactions of the
  American Mathematical Society} \textbf{306} (1988), 329--359.

\bibitem[GT02]{AT2002}
\bgroup\scshape{}A.~Granville\egroup{} and \bgroup\scshape{}T.~J.
  Tucker\egroup{}, It's as easy as abc,  \emph{Notices of the AMS} \textbf{49}
  (2002), 1224--1231.

\bibitem[GNT13]{GNT2013}
\bgroup\scshape{}C.~Gratton\egroup{}, \bgroup\scshape{}K.~Nguyen\egroup{}, and
  \bgroup\scshape{}T.~J. Tucker\egroup{}, A{B}{C} implies primitive prime
  divisors in arithmetic dynamics,  \emph{Bulletin of the London Mathematical
  Society} \textbf{45} (2013), 1194--1208.

\bibitem[GP15]{GP2015}
\bgroup\scshape{}G.~Grell\egroup{} and \bgroup\scshape{}W.~Peng\egroup{},
  \emph{Wall's conjecture and the {A}{B}{C} conjecture}, arXiv:1511.01210, Nov.
  2015.

\bibitem[Lau84]{laurent_equations_1984}
\bgroup\scshape{}M.~Laurent\egroup{}, \'{E}quations diophantiennes
  exponentielles,  \emph{Inventiones mathematicae} \textbf{78} (1984), 299--327
  (French).

\bibitem[Mas84]{Mason1984}
\bgroup\scshape{}R.~C. Mason\egroup{}, \emph{Diophantine equations over
  function fields}, Cambridge University Press, Cambridge, 1984.

\bibitem[MBR08]{BIR2008}
\bgroup\scshape{}S.-i.~I. Matthew~Baker\egroup{} and
  \bgroup\scshape{}R.~Rumely\egroup{}, A finiteness property of torsion points,
   \emph{Algebraic and Number Theory} \textbf{2} (2008), 217–248.

\bibitem[MR07]{RE2007}
\bgroup\scshape{}R.~J. McIntosh\egroup{} and \bgroup\scshape{}E.~L.
  Roettger\egroup{}, A search for {F}ibonacci-{W}ieferich and {W}olstenholme
  primes,  \emph{Math. Comput.} \textbf{76} (2007), 2087--2094.

\bibitem[Mir10]{Mirimanoff1910}
\bgroup\scshape{}D.~Mirimanoff\egroup{}, Sur le dernier th\'{e}or\`{e}me de
  fermat,  \emph{Comptes rendus hebdomadaires des s\'{e}ances de l'Acad\'{e}mie
  des sciences} \textbf{150} (1910), 293--206.

\bibitem[Pri]{PrimeGrid}
\bgroup\scshape{}PrimeGrid\egroup{},
  \url{http://www.primegrid.com/forum_thread.php?id=3008&nowrap=true\#45946}.

\bibitem[RW99]{PG1999}
\bgroup\scshape{}P.~Ribenboim\egroup{} and \bgroup\scshape{}G.~Walsh\egroup{},
  The {A}{B}{C} conjecture and the powerful part of terms in binary recurring
  sequences.,  \emph{Journal of Number Theory} \textbf{74} (1999), 134--137.

\bibitem[Sil84]{Silverman1984}
\bgroup\scshape{}J.~H. Silverman\egroup{}, The $s$-unit equation over function
  fields,  \emph{Math. Proc.} \textbf{95} (1984), 3--4.

\bibitem[Sil88]{Silverman1988}
\bgroup\scshape{}J.~H. Silverman\egroup{}, Wieferich's criterion and the
  {A}{B}{C}-conjecture,  \emph{Journal of Number Theory} \textbf{30} (1988),
  226--237.

\bibitem[Sil89]{Silverman1987}
\bgroup\scshape{}J.~H. Silverman\egroup{}, \emph{Integral points on curves and
  surfaces}, \textbf{1380 of Lecture notes in mathematics}, Springer-Verlag,
  1989.

\bibitem[Sto81]{Stothers1981}
\bgroup\scshape{}W.~W. Stothers\egroup{}, Polynomial identities and
  {H}auptmoduln,  \emph{Quart. J. Math. Oxford Ser.} \textbf{2} (1981),
  340--370.

\bibitem[SS92]{SunSun1992}
\bgroup\scshape{}Z.-H. Sun\egroup{} and \bgroup\scshape{}Z.-W. Sun\egroup{},
  Fibonacci numbers and {F}ermat’s {L}ast {T}heorem,  \emph{Acta
  Arithematica} \textbf{60} (1992), 371--388.

\bibitem[Suz94]{Suzuki1994}
\bgroup\scshape{}J.~Suzuki\egroup{}, On the generalized {W}ieferich criteria,
  \emph{Proc. Japan Acad. Ser. A Math. Sci.} \textbf{70} (1994), 230--234.

\bibitem[TY06]{TY2005}
\bgroup\scshape{}P.~Tauvel\egroup{} and \bgroup\scshape{}R.~W.~T. Yu\egroup{},
  \emph{Lie algebras and algebraic groups}, 1 ed., Springer, New York;Berlin;,
  2005;2006; (English).

\bibitem[Voj98]{Vojta1998}
\bgroup\scshape{}P.~Vojta\egroup{}, A more general {A}{B}{C} conjecture,
  \emph{International Mathematics Research Notices} \textbf{•} (1998),
  1103--1116.

\bibitem[Wal60]{Wall1960}
\bgroup\scshape{}D.~D. Wall\egroup{}, Fibonacci series modulo $m$,  \emph{Amer.
  Math. Monthly} \textbf{67} (1960), 525--532.

\bibitem[Wie09]{Wieferich1909}
\bgroup\scshape{}A.~Wieferich\egroup{}, Zum letzten {F}ermat'schen {T}heorem,
  \emph{Journal f\''{u}r die reine und angewandte Mathematik} \textbf{136}
  (1909), 293--302.

\bibitem[Yab07]{Yabuta2007}
\bgroup\scshape{}M.~Yabuta\egroup{}, The {A}{B}{C}-conjecture and the powerful
  numbers in {L}ucas sequences,  \emph{Fibonacci Quarterly} \textbf{45} (2007),
  134--137.

\end{thebibliography}
\bibliographystyle{Annual}
\end{document}